\documentclass{article}
\usepackage[letterpaper,margin=1.0in]{geometry}
\usepackage{amsthm}

\usepackage{amsmath,amsfonts,bm,amssymb}

\def\eqref#1{equation~\ref{#1}}

\def\1{\bm{1}}

\def\eps{{\epsilon}}

\def\vzero{{\bm{0}}}

\def\vd{{\bm{d}}}
\def\ve{{\bm{e}}}

\def\vn{{\bm{n}}}

\def\vu{{\bm{u}}}
\def\vv{{\bm{v}}}
\def\vw{{\bm{w}}}
\def\vx{{\bm{x}}}
\def\vy{{\bm{y}}}

\def\mF{{\bm{F}}}
\def\mG{{\bm{G}}}

\def\mP{{\bm{P}}}

\DeclareMathAlphabet{\mathsfit}{\encodingdefault}{\sfdefault}{m}{sl}
\SetMathAlphabet{\mathsfit}{bold}{\encodingdefault}{\sfdefault}{bx}{n}

\def\gN{{\mathcal{N}}}
\def\gO{{\mathcal{O}}}

\def\gT{{\mathcal{T}}}
\def\gU{{\mathcal{U}}}
\def\gV{{\mathcal{V}}}

\def\gX{{\mathcal{X}}}

\def\sR{{\mathbb{R}}}

\newtheorem{thm}{Theorem}[section]
\newtheorem{dfn}{Definition}[section]

\newtheorem{lem}{Lemma}[section]
\newtheorem{asm}{Assumption}[section]

\newtheorem{cond}{Condition}[section]

\newtheorem{prop}{Proposition}[section]

\usepackage{hyperref}
\usepackage{enumitem}
\usepackage[utf8]{inputenc} 
\usepackage[T1]{fontenc}    
\usepackage{hyperref}       
\usepackage{url}            
\usepackage{booktabs}       
\usepackage{amsfonts}       
\usepackage{nicefrac}       
\usepackage{microtype}      
\usepackage{graphicx}       
\usepackage[numbers,sort]{natbib}
\usepackage{multirow}
\usepackage{bbding}
\graphicspath{{media/}}     
\usepackage{color}
\usepackage{algpseudocode,algorithm}
\usepackage{nicefrac}
\usepackage{cancel}
\usepackage{graphicx} 
\usepackage{booktabs}
\usepackage{makecell}
\usepackage{multirow}
\usepackage{amssymb}

\DeclareMathOperator{\Proj}{Proj}
\usepackage{thmtools}
\usepackage{thm-restate}

\title{Optimal High-Order Methods for Solving \\
Monotone Variational Inequalities}

\author{Xinliang Zhang \textsuperscript{* 1} \qquad
Lesi Chen \textsuperscript{* 1} \qquad Linxuan Pan \textsuperscript{* 2} \\
Chengchang Liu \textsuperscript{3} \qquad
Junchi Yang \textsuperscript{2}
\qquad Jingzhao Zhang \textsuperscript{1} \\
\vspace{-2mm} \\
\normalsize{\textsuperscript{1} 
Institute for Interdisciplinary Information Sciences (IIIS), Tsinghua University} \\
\normalsize{\textsuperscript{2} School of Data Science, the Chinese University of Hong Kong (Shenzhen)}  \\
\normalsize{
\textsuperscript{3} Department of Artificial Intelligence, Westlake University} \\
\vspace{-2mm} \\
\normalsize{ \texttt{ \{xinliang23,chenlc23\}@mails.tsinghua.edu.cn, panlinxuan@link.cuhk.edu.cn}} \\
\normalsize{\texttt{
liuchengchang@westlake.edu.cn,
yangjunchi@cuhk.edu.cn,
jingzhaoz@mail.tsinghua.edu.cn
}}}

\begin{document}
\maketitle
\begingroup
\begin{NoHyper}
\renewcommand\thefootnote{*}
\footnotetext{Equal contributions.}
\end{NoHyper}
\endgroup
\begin{abstract}
We study second- and higher-order methods for solving smooth monotone variational inequalities (MVI). 
Monteiro and Svaiter (SIAM J. Optim., 2012) showed that a second-order method, NPE, converges at a rate of $\mathcal{O}(T^{-1.5})$. For convex-concave minimax optimization, a subclass of MVI problems, Chen, Liu, Luo, and Zhang (COLT 2025) recently improved this rate to $\tilde{\mathcal{O}}( T^{-1.75})$. However, the result has a substantial gap compared to the lower bound of $\Omega(T^{-2.5})$ established by Chen et al. (2026). In this paper, we propose a novel second-order method that achieves the optimal rate of $\gO(T^{-2.5})$. Our algorithm also extends to higher-order methods: for any integer $ p \ge 1$, we obtain a $p$th-order method with a convergence rate of $\gO(T^{-(3p-1)/2})$, matching the known lower bounds and therefore establishing optimal rates across all orders.
\end{abstract}

\section{Introduction}
Let \(F:\sR^d \to \sR^d\) be a continuously differentiable monotone operator. We study the monotone variational inequality (MVI) problem \citep{facchinei2003finite}, which aims to find a solution $\vx^* \in \gX$ such that
\begin{align} \label{eq:prob-mvi}
    \langle \mF(\vx^*), \vx - \vx^* \rangle \ge0 \quad \forall \vx \in \gX,
\end{align}
where $\gX \subseteq\mathbb{R}^{d}$ is a nonempty compact convex set.
Let $\gN_\gX(\vx)$ be the normal cone of $\gX$ at point $\vx$, equivalently, the subdifferential of the indicator function of~$\gX$. The MVI problem is equivalent to the monotone inclusion problem $\vzero \in \mF(\vx^*) + \gN_\gX(\vx^*)$, which
captures a lot of optimization problems, especially finding game-theoretical equilibria \citep{kinderlehrer2000introduction,giannessi1995variational,nocedal1999numerical,ba2025doubly,jordan2025adaptive}. A typical example is the following  convex-concave minimax optimization problem \citep{goodfellow2020generative,cesa2006prediction}:
\begin{align} \label{eq:prob-minimax}
    \min_{\vu \in \gU} \max_{\vv \in \gV} \phi(\vu, \vv).
\end{align}
This problem can be formulated as an MVI with the gradient operator $\mF(\vu,\vv) = [\nabla_\vu \phi(\vu,\vv), - \nabla_\vv \phi(\vu,\vv)]$. Minimax optimization has wide applications in machine learning, including adversarial training \citep{zhang2018mitigating}, AUC maximization~\citep{ying2016stochastic}, and distributionally robust optimization~\citep{carmon2022distributionally}. 

Following the oracle model established in classical textbooks \citep{nemirovskij1983problem,nesterov2018lectures}, we consider the $p$th-order algorithm class that can query the derivative information of $\mF$ up to order $p-1$:
\begin{equation} \label{eq:pth-order-oracle}
    (\mF(\vx), D \mF(\vx),\ldots, D^{p-1} \mF(\vx)).
\end{equation}
We also assume that the operator is $p$th-order $L_p$-smooth, \textit{i.e.}, $D^{p-1} \mF$ is $L_p$-Lipschitz continuous. 
The optimal oracle complexity for convex optimization has been settled, where the monotone operator $\mF(\vx) = \nabla f(\vx)$ is the gradient of a convex function $f$. 
For first-order methods ($p=1$), \citet{nemirovskij1983problem} showed a lower bound of $\Omega(L_1/T^2)$, and
\citet{nesterov1983method} proposed the optimal method that achieves the matching convergence rate of $\gO(L_1/T^2)$, where $T$ is the total number of oracle calls. 
For second- and higher-order methods ($p\ge2$), \citet{monteiro2013accelerated} proposed near-optimal methods that converge at a rate of $\tilde \gO(L_2/T^{3.5})$. Subsequently, \citet{kovalev2022first} and \citet{carmon2022optimal} independently improved this rate to $\gO(L_2/T^{3.5})$ for $p=2$ and $\gO(L_p/T^{(3p+1)/2})$ for general $ p\ge 2$. 
\citet{arjevani2019oracle} provided matching lower bounds of $\Omega(L_p/T^{(3p+1)/2})$ for all $p$th-order methods.

However, the optimality of the complexity of MVI beyond convex optimization remains open except for first-order methods. 
For $p=1$, \citet{korpelevich1976extragradient} proposed the extragradient method with a convergence rate of $\gO(L_1/T)$, and \citet{nemirovski2004prox} showed a matching lower bound of $\Omega(L_1/T)$ in 2004. 
For $p\geq 2$, there  are still gaps between the upper- and lower bounds.
In 2012, \citet{monteiro2012iteration} proposed the Newton Proximal Extragradient (NPE) method with a fast second-order convergence rate of $\gO(L_2/T^{1.5})$ for $p=2$, where the cost at each iteration is nearly the same as that of matrix inversion/multiplication \citep{duan2023faster,demmel2007fast}. 
The $p$th-order generalization of NPE achieves the rate of $\gO(L_p/T^{(p+1)/2})$~\citep{bullins2022higher,adil2022optimal,huang2022approximation,lin2023monotone,jiang2022generalized,lin2022perseus,nesterov2023high}, which is speculated to be optimal in \citet{adil2022optimal,lin2022perseus}.

\citet{chen2025solving} leveraged the optimal methods for convex optimization to solve the convex-concave minimax problem (\ref{eq:prob-minimax}) with convergence rates of $\tilde \gO(L_2/T^{1.75})$ and $\tilde \gO(L_p/T^{(3p+1)/4})$ using second- and $p$th-order oracles~\citep{chen2026solving}, respectively.
These are faster than the rates achieved by NPE and generalized NPE methods for the general MVI problem~(\ref{eq:prob-mvi}).
However, it is unknown whether the $\gO(L_p/T^{(p+1)/2})$ rate \citep{monteiro2012iteration,lin2022perseus} can be improved without leveraging the minimax problem structure. 
More importantly, the optimal high-order complexity of both the MVI problem~\eqref{eq:prob-mvi} and the convex-concave minimax problem~\eqref{eq:prob-minimax} remains unresolved, as the best known upper bounds still fall short of the lower bound $\Omega(L_p/T^{(3p-1)/2})$ for $p$th-order oracles~\citep{chen2026solving}.

\paragraph{Our results.} In this paper, we fully resolve the optimal high-order complexity for MVI problems. For any $p\ge 2$, our method achieves the optimal rate of $\gO(L_p/T^{(3p-1)/2})$. The approach is based on a novel accelerated Halpern method that converges at the optimal rate. Each iteration requires obtaining a resolvent operator, which can be solved efficiently at an amortized cost of $\gO(1)$ using the anchored tensor method (ATM) introduced in our previous note \citep{chen2026halpern}.

\paragraph{AI Usage.} The authors have been seeking the solution to this problem since their prior work \citep{chen2026solving}, which gives an upper bound of $\tilde \gO(L_p/T^{(3p+1)/4})$ and a lower bound of $\Omega(L_p/T^{(3p-1)/2})$ for high-order convex-concave minimax optimization. Following the release of ChatGPT 5.6 Pro Sol, they posed the problem to Hengyu Wang and Yongchao Chen at Apex Intelligence. A search using their company's system revealed that using the Halpern iteration with an anchored tensor method (ATM) as the resolvent operator's sub-solver achieves a convergence rate of $\tilde \gO(L_p/T^{p})$. This intermediate result was independently recorded in their previous short note~\citep{chen2026halpern}. More recently, upon the release of ChatGPT 6 Astra, the authors engaged in multi-round interactions with it, deriving a novel accelerated Halpern iteration as the outer loop that leads to the optimal convergence rate of $\gO(L_p/T^{(3p-1)/2})$ presented in this paper. 


We obtained our results on September 9, as documented by the publicly available ChatGPT record\footnote{https://chatgpt.com/share/6aae8bce-dc34-83ee-807b-dec17139119a}. During the preparation of this manuscript, \citet{zhang2026matching} independently established a near-optimal upper bound of $\tilde{\gO}(L_p/T^{(3p-1)/2})$ and posted their work on arXiv on September 11. In comparison, our result removes the additional logarithmic factors and tightly matches the optimal convergence rate. Our approach also leads to a simpler algorithmic framework and a more concise analysis.

\subsection{Related Works}

\paragraph{Convex optimization.} When the operator $\mF(\vx) = \nabla f(\vx)$ is the gradient of a convex function $f$, \citet{nesterov2006cubic} proposed the cubic regularized Newton (CRN) method, which is the first globally convergent second-order method and achieves a rate of $\gO(L_2/T^2)$ for $p=2$.
\citet{nesterov2008accelerating} proposed the accelerated CRN to achieve a fast rate of $\gO(L_2/T^3)$. \citet{monteiro2013accelerated} proposed the accelerated Newton proximal extragradient (A-NPE) method that converges at an even faster rate of $\tilde \gO( L_2/ T^{3.5})$. For $p \ge 2$, \citet{gasnikov2019optimal,bubeck2019near,jiang2021optimal} proposed a $p$th-order generalization of A-NPE that converges at the rate of $\tilde \gO(L_p / T^{(3p+1)/2})$. Very recently, \citet{kovalev2022first,carmon2022optimal} removed the bisection sub-procedure in A-NPE and achieved the optimal rate of $\gO(L_p / T^{(3p+1)/2})$.

Regarding lower bounds, \citet{agarwal2018lower} showed a lower bound of $\Omega(L_p/T^{(5p+1)/2})$ for randomized algorithms. Concurrently, \citet{arjevani2019oracle} showed the optimal lower bound of $\Omega( L_p/T^{(3p+1)/2})$ for deterministic algorithms. Recently, \citet{garg2021near} improved the lower bound of randomized and quantum algorithms to $\tilde \Omega( L_p/T^{(3p+1)/2})$, which matches the upper bounds up to logarithmic factors.

\paragraph{Monotone variational inequalities.} For a general operator $\mF$, \citet{monteiro2012iteration} proposed the Newton proximal extragradient (NPE) method that globally converges at the rate of $\gO(L_2/T^{1.5})$ for $p=2$, using $T$ second-order oracle calls and $\gO(T \log T)$ matrix inversion operations. \citet{bullins2022higher} generalized NPE to $p$th order and showed a convergence rate of $\gO(L_p/T^{(p+1)/2})$. Subsequently, many simpler analyses or alternative algorithms have been found \citep{huang2022approximation,adil2022optimal,lin2023monotone,jiang2022generalized,nesterov2023high,lin2022perseus}, but all the established convergence rates are $\gO(L_p/T^{(p+1)/2})$. Moreover, \citet{jiang2024adaptive,alves2023search} proposed bisection-free methods for $p=2$ that also converge at the rate of $\gO(L_2/T^{1.5})$ and only require a single matrix inversion at each iteration.

When $\mF$ is the gradient operator of a convex-concave function $\phi: \sR \rightarrow \sR$, \citet{chen2025solving} applied a primal-dual Monteiro-Svaiter acceleration \citep{monteiro2013accelerated} to the proximal function and achieved a fast rate of $\gO(L_2/T^{1.75})$ for second-order minimax optimization ($p=2$). In the full version, \citet{chen2026solving} showed that the $p$th-order generalization achieves the rate of $\gO(L_p/ T^{(3p+1)/4})$ and also established a lower bound of $\Omega(L_p/T^{(3p-1)/2})$. This paper fully closes the gap left by their work by proposing a novel method whose complexity matches their lower bound.

\section{Preliminaries}

\paragraph{Notations.}  We use $\Vert \, \cdot \, \Vert$ to denote the Euclidean norm for vectors and the spectral norm for matrices and tensors in a unified way.
We hide logarithmic factors in the notation $\tilde \gO(\,\cdot\,)$. Also, we use the notations $\gO_p(\,\cdot\,)$ and $\tilde \gO_p(\,\cdot\,)$ to hide the constants that depend on $p$. We interchangeably use $f  = \Theta(g)$ and $f \asymp g$ to denote that two functions have the same order up to constants, and also use $\Theta_p(\,\cdot\,)$ and $\asymp_p$ to hide the constants that depend on $p$.  We use $D^q \mF$ to denote the $q$th-order derivative of an operator $\mF: \gX \to \sR^d$. We also let ${\rm Proj}_{\gX}$ be the projection operator onto the set $\gX$. 

\subsection{Problem Setup}

We study the MVI Problem (\ref{eq:prob-mvi}) under the following standard assumptions \citep{huang2022approximation,lin2022perseus,bullins2022higher,jiang2022generalized}.

\begin{asm} \label{asm:X}
$\gX \subseteq\mathbb{R}^{d}$ is a nonempty compact convex set.
\end{asm}

\begin{asm} \label{asm:x-star}
 We assume there exists $\vx^{\star}$ such that $ 0 \in (\mF + \gN_\gX)(\vx^{\star})$.
\end{asm}

\begin{asm} \label{asm:F}
    \(\mF:\sR^d \to\mathbb{R}^{d}\) is continuous and monotone:
\[
\langle \mF(\vx)-\mF(\vy),\vx-\vy\rangle\geq0,\quad \forall \vx,\vy\in \gX.
\]
\end{asm}
\begin{asm} \label{asm:pth-smooth}
Assume $\mF: \sR^d \to \sR^d$ is $p$th-order $L_p$-smooth:
\begin{equation} \label{eq:F-pth-smooth}
    \left\|D^{p-1}\mF(\vx)-D^{p-1}\mF(\vy)\right\|\leq L_{p}\left\|\vx-\vy\right\|, \quad \forall \vx,\vy \in \gX.
\end{equation}
\end{asm}
Under these assumptions, we aim to find an $\eps$-solution whose tangent residual~\citep{cai2022finite,cai2023accelerated,cai2024accelerated} (which is also the norm of the operator $\mF$ when $\gX = \sR^d$) is smaller than $\eps$. 

\begin{dfn} \label{dfn:epsilon-solution}
We say $\hat \vx \in \gX$ is an $\eps$-(strong) solution to Problem (\ref{eq:prob-mvi}) if 
\[
{\rm dist}(\vzero, \mF(\hat \vx ) + \gN_\gX(\hat \vx)) \le \eps.
\]
\end{dfn}

A point that satisfies the above conditions is also referred to as a strong solution/Stampacchia variational inequality solution~\citep{hartman1966some}. This is a stronger notion than a weak solution/Minty variational inequality solution~\citep{minty1962monotone} measured by the restricted gap function \citep{nesterov2007dual} ${\rm gap}(\vy):= \sup_{\vy' \in \gX} \langle \mF(\vy), \vy - \vy' \rangle$ on a compact set, because we have ${\rm gap}(\vy) \le {\rm dist}(\vzero, \mF(\vy) + \gN_\gX(\vy)) \cdot {\rm diam}(\gX)$ by the monotonicity of $\mF$ and Cauchy–Schwarz. 



\subsection{Tensor Steps}

A basic operation to leverage the $p$th-order oracle in \eqref{eq:pth-order-oracle} is the following tensor step \citep{huang2022approximation,lin2022perseus,bullins2022higher,jiang2022generalized}, which solves the MVI problem/monotone inclusion induced by a local Taylor approximation.

\begin{dfn} \label{dfn:mvi-tensor-step}
Under Assumptions \ref{asm:F} and \ref{asm:pth-smooth}, for an input point $\vx \in \gX$ the $p$th-order tensor step with regularization parameter $M\ge L_p$ outputs $\vy = \gT_\mF^p(\vx;M)$ such that
\begin{equation} \label{eq:first-order-tensor}
    \vzero \in \bar \mF_\vx^p(\vy)+\frac{M}{p!}\left\|\vy-\vx\right\|^{p-1}(\vy-\vx)+\gN_{\gX}(\vy),
\end{equation}
where \(\bar \mF_{\vx}^p(\vy):=\sum_{k=0}^{p-1}D^{k}\mF(\vx)[\vy-\vx]^{k} / k!\) is the $(p-1)$th-order Taylor expansion for $\mF$ at the center point $\vx$.
\end{dfn}

When $p=1$, the above tensor step is exactly the (projected) gradient step that can be performed using vector addition operations; when $p=2$, it can be solved in the same spirit as the cubic regularized Newton subproblem \citep{nesterov2006cubic} using a similar binary search, whose cost is nearly the same as that of matrix multiplication/inversion \citep{duan2023faster,demmel2007fast}; in general ($p \ge 2$), the MVI problem in \eqref{eq:first-order-tensor} can be solved in polynomial time using the interior point method \citep{ralph2000superlinear,qi2002smoothing} or the cutting plane method \citep{jiang2020improved}. We also recall the following lemma, which relates the output of a tensor step to the tangent residual in the convergence criterion in Definition \ref{dfn:epsilon-solution}.

\begin{lem}\label{lem:tensor-residual-certificate}
Under Assumptions \ref{asm:F} and \ref{asm:pth-smooth}, for an input point $\vx \in \gX$ the $p$th-order tensor step with $M\ge L_p$ outputs $\vy = \gT_\mF^p(\vx;M)$ such that
\begin{equation}\label{eq:atm-certificate}
\|\mF(\vy)+\vn\|\le\frac{M+L_p}{p!}\|\vx-\vy\|^p,
\end{equation}
where $\vn$ is the following element in the normal cone at $\vy$:
\begin{equation}\label{eq:atm-normal}
\vn=-\bar\mF_{\vx}^p(\vy)-\frac{M}{p!}\|\vy-\vx\|^{p-1}(\vy-\vx)\in\gN_\gX(\vy).
\end{equation}
\end{lem}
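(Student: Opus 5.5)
The argument has two parts. First we check that $\vn$ lies in the normal cone; then we bound $\|\mF(\vy)+\vn\|$ by a Taylor remainder estimate plus the size of the regularization term.

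\emph{Membership in the normal cone.} Definition \ref{dfn:mvi-tensor-step} states that $\vzero \in \bar \mF_\vx^p(\vy)+\frac{M}{p!}\|\vy-\vx\|^{p-1}(\vy-\vx)+\gN_{\gX}(\vy)$. Hence some $\vn\in\gN_\gX(\vy)$ makes this sum equal to zero. Solving for $\vn$ gives exactly the expression in \eqref{eq:atm-normal}, so $\vn\in\gN_\gX(\vy)$ by construction.

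\emph{The residual bound.} Substituting $\vn$ gives
\[
\mF(\vy)+\vn=\bigl(\mF(\vy)-\bar\mF_\vx^p(\vy)\bigr)-\frac{M}{p!}\|\vy-\vx\|^{p-1}(\vy-\vx).
\]
By the triangle inequality, the second term has norm exactly $\frac{M}{p!}\|\vy-\vx\|^p$. It therefore suffices to prove the Taylor remainder estimate
\[
\|\mF(\vy)-\bar\mF_\vx^p(\vy)\|\le\frac{L_p}{p!}\|\vy-\vx\|^p .
\]

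\emph{The Taylor remainder estimate.} This is the standard bound under Lipschitz continuity of $D^{p-1}\mF$. Both $\vx$ and $\vy$ lie in $\gX$, which is convex, so the segment between them lies in $\gX$ and Assumption \ref{asm:pth-smooth} applies along it. Write $\vh=\vy-\vx$. Taylor's formula with integral remainder of order $p-1$ gives
\[
\mF(\vy)-\bar\mF^p_\vx(\vy)=\frac{1}{(p-2)!}\int_0^1(1-t)^{p-2}\bigl(D^{p-1}\mF(\vx+t\vh)-D^{p-1}\mF(\vx)\bigr)[\vh]^{p-1}\,dt
\]
for $p\ge2$. Bounding the integrand with $\|D^{p-1}\mF(\vx+t\vh)-D^{p-1}\mF(\vx)\|\le L_p t\|\vh\|$ and using $\int_0^1 t(1-t)^{p-2}\,dt=\frac{1}{p(p-1)}$ yields the bound $\frac{L_p}{p!}\|\vh\|^p$. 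For $p=1$ the estimate is simply Lipschitz continuity of $\mF$. Adding the two terms gives \eqref{eq:atm-certificate}.

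The only step with real content is the remainder estimate. The one subtlety is that Assumption \ref{asm:pth-smooth} is stated only on $\gX$, and convexity of $\gX$ handles this. Note also that monotonicity, Assumption \ref{asm:F}, is not needed for this lemma.
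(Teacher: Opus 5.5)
Your proposal is correct and follows the same route as the paper: the normal-cone membership is read off directly from the tensor-step inclusion, and the residual bound comes from the triangle inequality combined with the Taylor remainder estimate $\|\mF(\vy)-\bar\mF_\vx^p(\vy)\|\le \frac{L_p}{p!}\|\vy-\vx\|^p$. The only difference is that you derive this remainder estimate explicitly from the integral form of Taylor's theorem, noting that convexity of $\gX$ keeps the segment where Assumption~\ref{asm:pth-smooth} is invoked, whereas the paper simply cites it as standard.
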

\begin{proof}
The inclusion in \eqref{eq:atm-normal} follows directly from the tensor-step condition in Definition \ref{dfn:mvi-tensor-step}. By the Taylor remainder bound under Assumption \ref{asm:pth-smooth} and the triangle inequality,
\[
\|\mF(\vy)+\vn\|
\le \|\mF(\vy)-\bar\mF_{\vx}^p(\vy)\|+\frac{M}{p!}\|\vy-\vx\|^p
\le \frac{L_p+M}{p!}\|\vy-\vx\|^p.
\]
\end{proof}
 
\subsection{Resolvent Operators}

Following our previous note \citep{chen2026halpern}, the acceleration is based on a Halpern-type iteration, which applies to a non-expansive operator $\mP$. In the context of MVI, a natural candidate for $\mP$ is the following 
resolvent/proximal operator \citep{rockafellar1976monotone,combettes2018monotone,facchinei2003finite}. 
It is well known that $\mP_\eta$ is non-expansive  if $\mF$ is monotone \citep{ryu2016primer,rockafellar1976monotone}.

\begin{dfn} \label{dfn:resolvent}
Under Assumption \ref{asm:F}, for $\eta>0$, we can define the single-valued resolvent operator as
\[
\mP_\eta(\vx) = ({\rm Id} + \eta (\mF + \gN_\gX ))^{-1} (\vx).
\]
\end{dfn}

\begin{prop}[{\citet[Section 6]{ryu2016primer}}]\label{prop:resolvent-geom}
Under Assumption \ref{asm:F}, the operator $\mP_\eta$ is non-expansive for any $\eta>0$, 
\[
\| \mP_{\eta}(\vx)-\mP_{\eta}(\vx') \| \le \| \vx - \vx' \|, \quad \forall \vx, \vx' \in \gX.
\]
\end{prop}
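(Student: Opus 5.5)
The plan is to show that $\mP_\eta$ is in fact \emph{firmly} non-expansive, i.e.
\[
\|\mP_\eta(\vx)-\mP_\eta(\vx')\|^2 \le \langle \mP_\eta(\vx)-\mP_\eta(\vx'), \vx-\vx'\rangle,
\]
and then conclude non-expansiveness by Cauchy--Schwarz. Before that, we need $\mP_\eta$ to be well defined and single-valued, as claimed in Definition \ref{dfn:resolvent}.

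\textbf{Existence.} For fixed $\vx$, a point $\vy=\mP_\eta(\vx)$ must satisfy
\[
\vzero\in \vy-\vx+\eta\mF(\vy)+\gN_\gX(\vy).
\]
This is the variational inequality on $\gX$ for the operator $\vy\mapsto \vy-\vx+\eta\mF(\vy)$. That operator is continuous by Assumption \ref{asm:F}, and $\gX$ is compact and convex by Assumption \ref{asm:X}, so a solution exists by the standard Hartman--Stampacchia argument.

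\textbf{Uniqueness and firm non-expansiveness.} Both follow from one computation. Let $\vy=\mP_\eta(\vx)$ and $\vy'=\mP_\eta(\vx')$. By definition there are normal vectors $\vn\in\gN_\gX(\vy)$ and $\vn'\in\gN_\gX(\vy')$ with
\[
\vx-\vy=\eta\mF(\vy)+\vn, \qquad \vx'-\vy'=\eta\mF(\vy')+\vn'.
\]
Subtract the two identities and take the inner product with $\vy-\vy'$:
\[
\langle (\vx-\vx')-(\vy-\vy'),\vy-\vy'\rangle=\eta\langle \mF(\vy)-\mF(\vy'),\vy-\vy'\rangle+\langle \vn-\vn',\vy-\vy'\rangle .
\]
Both terms on the right are nonnegative:
\begin{itemize}[leftmargin=*]
\item the first because $\mF$ is monotone on $\gX$ (Assumption \ref{asm:F}) and $\vy,\vy'\in\gX$;
\item the second because the normal cone is monotone. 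Indeed, $\langle \vn,\vy'-\vy\rangle\le 0$ and $\langle \vn',\vy-\vy'\rangle\le0$ by definition of $\gN_\gX$, and adding these gives $\langle \vn-\vn',\vy-\vy'\rangle\ge 0$.
\end{itemize}
Hence $\|\vy-\vy'\|^2\le\langle \vx-\vx',\vy-\vy'\rangle$, which is the firm non-expansiveness inequality. Applying Cauchy--Schwarz to the right-hand side gives $\|\vy-\vy'\|\le\|\vx-\vx'\|$. Taking $\vx=\vx'$ gives $\vy=\vy'$, so $\mP_\eta$ is single-valued.

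\textbf{Main subtlety.} The only delicate point is existence, since the argument above yields uniqueness but not a solution. If one prefers not to cite Hartman--Stampacchia, an alternative is maximal monotonicity. $\mF+\gN_\gX$ is maximal monotone, being the sum of a continuous monotone operator and the subdifferential of a proper lsc convex function with full-domain compatibility. Minty's theorem then gives that ${\rm Id}+\eta(\mF+\gN_\gX)$ is surjective. This is the route of \citet{ryu2016primer}, which is the reference cited for the statement.
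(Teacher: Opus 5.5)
Your proof is correct and is the standard argument behind the paper's citation to \citet{ryu2016primer}: monotonicity of $\mF$ on $\gX$ together with monotonicity of $\gN_\gX$ gives firm non-expansiveness, and Cauchy--Schwarz then gives non-expansiveness. You supply existence via Hartman--Stampacchia on the compact set $\gX$ rather than Minty's theorem, and your computation never uses $\vx,\vx'\in\gX$, so it also covers the extension to inputs in $\sR^d$ that the paper invokes in the proof of Theorem~\ref{thm:outer-loop}.
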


\section{Main Result}

 
The main theorem below achieves the optimal complexity upper bound of $T = \gO(\epsilon^{-2/(3p-1)})$ for finding an $\eps$-solution, which is equivalent to the convergence rate $ \gO( T^{-(3p-1)/2})$ as claimed.

\begin{thm}\label{thm:main}
Under Assumptions \ref{asm:X}-\ref{asm:pth-smooth}, for any integer $p \ge 2$, there exists an algorithm (Algorithm \ref{alg:inexact-halpern-anchored}) that can return an $\epsilon$-solution $\vx \in \gX$  with a $p$th-order oracle complexity of 
\[
T = {{\mathcal{O}}\Bigg(\left(  \frac{L_{p} D^p}{\epsilon}\right)^{2/(3p-1)}\Bigg),}
\]
where $D = \left\|\vx_{0}-\vx^*\right\|$ is the distance of the initial point $\vx_0 \in \gX$ to the optimal solution $\vx^*$.
\end{thm}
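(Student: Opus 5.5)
We will build Algorithm \ref{alg:inexact-halpern-anchored} as an accelerated Halpern iteration whose outer loop uses resolvents $\mP_{\eta_k}$ with increasing step sizes $\eta_k$. Each resolvent is computed inexactly by the anchored tensor method (ATM) of \citet{chen2026halpern}. The target complexity decomposes as $(3p-1)/2 = (p+1)/2 + (p-1)$. Here $(p+1)/2$ comes from a tensor-step residual certificate (Lemma \ref{lem:tensor-residual-certificate}) with Monteiro--Svaiter-type step-size coupling. The extra $p-1$ comes from Halpern acceleration, which turns an $O(1/k)$ fixed-point residual decay into a rate that depends on the growing effective step sizes.

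\textbf{Step 1 (outer loop).} Take anchor $\vx_0$ and iterate
\[
\vx_{k+1}\approx \mP_{\eta_k}\bigl(\beta_k \vx_0+(1-\beta_k)\vx_k\bigr).
\]
Use a Lyapunov/potential function of the form
\[
\Phi_k=A_k\,\eta_k\,\langle \mF(\vx_k)+\vn_k,\vx_k-\vx_0\rangle+\tfrac{A_k\eta_k^2}{2}\|\mF(\vx_k)+\vn_k\|^2 ,
\]
in the style of the Halpern analyses of \citet{cai2022finite}. Here the weights $A_k$ and the coefficients $\beta_k=a_k/A_{k+1}$ satisfy a quadratic coupling such as $a_k^2\asymp A_{k+1}\eta_k$, mirroring A-NPE. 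Monotonicity (Assumption \ref{asm:F}) and the normal-cone inclusion make $\Phi_k$ nonincreasing up to inexactness errors. This should yield
\[
\eta_k\|\mF(\vx_k)+\vn_k\|\lesssim D\cdot \frac{\sum_{i}\eta_i^{\text{(weighted)}}}{A_k},
\]
which is roughly $\|\text{res}_k\|\lesssim D/\sum_{i\le k}\sqrt{\eta_i}^{\,2}$-type accumulation. The point is that the residual shrinks like $D$ divided by a quantity that grows faster than linearly in $\eta$.

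\textbf{Step 2 (step-size selection).} Choose $\eta_k$ adaptively by the large-step condition
\[
\eta_k\|\vy_k-\vz_k\|^{p-1}\asymp 1/L_p ,
\]
where $\vz_k$ is the Halpern-combined point. This is exactly what makes a tensor step of Definition \ref{dfn:mvi-tensor-step} an accurate approximate resolvent: by Lemma \ref{lem:tensor-residual-certificate} the error is $\frac{M+L_p}{p!}\|\vz_k-\vy_k\|^p\le \sigma\|\vz_k-\vy_k\|/\eta_k$. Summing the potential decrease then gives a bound $\sum_k A_{k+1}\|\vy_k-\vz_k\|^2\lesssim D^2$. Combining this with the step-size condition via Hölder/reverse-Hölder (as in Monteiro--Svaiter and \citet{carmon2022optimal}) gives $A_T\gtrsim T^{(3p+1)/2}/(L_pD^{p-1})$-type growth. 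Plugging this into the residual bound of Step 1 gives
\[
\text{res}_T\lesssim L_pD^p/T^{(3p-1)/2}.
\]

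\textbf{Step 3 (inner solver, amortized $O(1)$).} To avoid the logarithmic bisection over $\eta_k$, we follow the bisection-free idea of \citet{kovalev2022first,carmon2022optimal}. We use the ATM: it anchors the tensor step at the current combination and either accepts the step (large-step condition met) or shrinks and reuses the point. Together with a potential argument, this shows that the number of rejected steps is bounded by the number of accepted steps plus a constant, so the total oracle count is $O(T)$.

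I expect Step 2, and especially its interaction with Step 1, to be the main obstacle. Halpern weights depend on $\eta_k$ through $\beta_k$, while $\eta_k$ depends on the point $\vy_k$ produced with those weights. The hard part is to set up an implicit coupling that keeps the potential monotone under inexact resolvents and still yields the reverse-Hölder growth without losing logs. To handle this, I would first try the momentum-style parametrization where $\beta_k$ is defined through $A_{k+1}=A_k+a_k$ with $a_k^2=\lambda_k A_{k+1}$. I would then verify the one-step inequality directly from the monotone inclusion at $\vy_k$.
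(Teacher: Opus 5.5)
There is a genuine gap, in Step 2, and it is where the paper's key idea sits.

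\textbf{The gap.} You center the tensor step and the large-step condition $\eta_k L_p\|\vy_k-\bm{z}_k\|^{p-1}\asymp 1$ at the Halpern-combined point $\bm{z}_k=\beta_k\vx_0+(1-\beta_k)\vx_k$. You then assert an MS-type bound $\sum_k A_{k+1}\|\vy_k-\bm{z}_k\|^2\lesssim D^2$ in order to run reverse H\"older. Nothing in your outline produces this bound for a monotone operator. In Monteiro--Svaiter it comes from the convex function-value estimate sequence. A Halpern Lyapunov function $P_t=2B_t\langle\vv_t,\vu_t-\vx_0\rangle+C_{t+1}\|\vv_t\|^2$, under the coupling $B_t(1-\beta_t)=B_{t-1}$, $C_t=B_{t-1}\eta_t/\beta_t$, $C_{t+1}=C_t+2B_t\eta_t$ that allows the fastest step-size growth, dissipates only $C_t\|\vv_t-\vv_{t-1}\|^2$ (plus a nonnegative monotonicity term).

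Your displacement $\|\vy_k-\bm{z}_k\|\approx\eta_k\|\vv_k\|$ is the fixed-point residual itself. The potential certifies it only to be $\lesssim D/k$. Inserting $D/k$ into your large-step condition forces $\eta_k\asymp k^{p-1}/(L_pD^{p-1})$. That gives $\|\vv_T\|\lesssim D/(T\eta_T)\asymp L_pD^p/T^{p}$, which is the Halpern+ATM rate of \citet{chen2026halpern}, not $T^{-(3p-1)/2}$.

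\textbf{Secondary problems.}
\begin{itemize}
\item Your $\Phi_k$ has the wrong weight ratio. The $\|\vv\|^2$ weight divided by the inner-product weight must be of order $\eta_t/\beta_t\asymp t\eta_t$; in the paper it is $C_{t+1}/(2B_t)=\eta_t(t+2q)/(2q)$. With your ratio $\eta_k/2$, the $\|\vv_{t-1}\|^2$ cross terms cannot be absorbed for growing step sizes.
\item The bisection-free argument of Kovalev--Gasnikov and Carmon et al.\ is tied to the convex potential. You do not show it survives the implicit $\beta_k$--$\eta_k$ coupling that you yourself flag.
\item ATM is not an accept/reject large-step scheme. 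It is a two-phase anchored solver for the strongly monotone resolvent subproblem: decreasing regularization $\mu_k$, then local linear convergence.
\end{itemize}

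\textbf{What the paper does instead.} It avoids adaptive step sizes entirely. It fixes $\beta_t=q/(t+q)$, $\eta_t=\eta_{t-1}(t+2q-1)/(t+q)$ and $\eta_0=1/(L_pD^{p-1})$, so $\eta_t\asymp\eta_0 t^{3(p-1)/2}$. The Lyapunov argument then gives $\|\vv_{T-1}\|\lesssim D/(\eta_0T^{q})$, provided each resolvent is solved to absolute accuracy $\delta_T/\eta_t$ with $\delta_T=D/T^{3/2}$.

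The missing ingredient is the predictor $\vy_t=\Proj_\gX(\vx_t-\eta_t\vv_{t-1})$, used as the inner solver's warm start. Nonexpansiveness of the projection and of $\mP_{\eta_t}$ gives $r_t=\|\vy_t-\mP_{\eta_t}(\vx_t)\|\le\eta_t\|\vv_t-\vv_{t-1}\|+2\|\ve_t\|$. So the dissipated quantity $\sum_tC_t\|\vv_t-\vv_{t-1}\|^2=\gO_q(D^2/\eta_0)$ yields $\sum_t(t+q)^2r_t^2\lesssim D^2$. In other words, starting distances are of order $D/t^{3/2}$ on average instead of $D/t$.

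That extra factor $t^{-1/2}$ is what the exponent $\tfrac{3p-1}{2}=1+\tfrac32(p-1)$ encodes. Halpern contributes the factor $1/T$. The starting distance dictates how fast $\eta_t$ may grow, because the tensor-step local region is $\rho_t\asymp(\eta_tL_p)^{-1/(p-1)}$. With $\rho_t\asymp D/(t+1)^{3/2}$, Lemma \ref{lem:atm-absolute} bounds the $t$-th solve by $\gO(1+r_t/\rho_t+\log_+(\rho_t/\delta_T))$ calls. Cauchy--Schwarz on $\sum_t(t+q)^{3/2}r_t/D$, together with $\sum_t\log(T/(t+1))=\gO(T)$, gives $\gO(T)$ calls overall.

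Without this predictor, or some device that makes the starting distance scale with $\|\vv_t-\vv_{t-1}\|$ rather than $\|\vv_t\|$, your argument stalls at $T^{-p}$.
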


The upper bound in this theorem matches the lower bound of $\Omega(\epsilon^{-2/(3p-1)})$ in \citet{chen2026solving} up to constants and directly leads to a family of optimal algorithms for any integer $p$. For instance, in the basic setting $p=2$, we can obtain a superfast Newton method that achieves the optimal complexity of $\gO(\eps^{-2/5})$, which significantly improves on the classical result of $\gO(\eps^{-2/3})$ by \citet{monteiro2012iteration}. We achieve this optimal complexity bound by a double-loop method, and we now introduce each of its components in the following.


\section{Outer Loop: Inexact Accelerated Halpern Iteration}

\begin{algorithm}[htbp]  
\caption{\textsf{Inexact-Accelerated-Halpern-Iteration}$(\mF,\vx_0, \{\eta_t\}_{t=0}^{T-1}, \{ \beta_t \}_{t=0}^{T-1}, \delta_T, T, M)$}  \label{alg:inexact-halpern-anchored}
\begin{algorithmic}[1] 
\State $\vy_0=\vx_0$, $\mG_0(\vx)=\mF(\vx)+\eta_0^{-1}(\vx-\vx_0)$
\State $(\vu_0,\vn_0)=\textsf{$R$-Adaptive-ATM}(\mG_0,\vy_0,\eta_0^{-1}, M, \delta_T)$, $\vv_0=\mF(\vu_0)+\vn_0$
\State \textbf{for} $t = 1,\cdots, T-1$ \textbf{do}
\State \quad $\vx_t = (1- \beta_t) \vu_{t-1} + \beta_t \vx_0$ 
\State \quad $\vy_t = {\rm Proj}_\gX( \vx_t - \eta_t \vv_{t-1} )$
\State \quad $\mG_t(\vx) = \mF(\vx) + \eta_t^{-1}( \vx - \vx_t)$
\State \quad $(\vu_t, \vn_t) = \textsf{$R$-Adaptive-ATM}(\mG_t, \vy_t, \eta_t^{-1}, M, \delta_T)$, $\vv_t = \mF(\vu_t) + \vn_t$
\State \textbf{end for} 
\State \textbf{return} $(\vu_{T-1}, \vv_{T-1})$
\end{algorithmic}
\end{algorithm}

We propose an inexact accelerated Halpern iteration in Algorithm \ref{alg:inexact-halpern-anchored}, which improves the convergence rate of classical Halpern iteration \citep{halpern1967fixed,lieder2020convergence,diakonikolas2020halpern,chen2026halpern} from $\gO(T^{-p})$ to $\gO(T^{-(3p-1)/2})$ for solving high-order MVI problems. 
The introduction of an auxiliary variable $\vy_t = {\rm Proj}_\gX( \vx_t - \eta_t \vv_{t-1} )$ marks a fundamental departure from the classical Halpern iteration and serves as the core mechanism for optimal acceleration.
By acting as an estimate of the resolvent operator $P_{\eta_t}(\vx_t)$ in Definition \ref{dfn:resolvent},  $\vy_t$ enables tighter error control for the inner solver, thereby allowing the outer iteration to adopt larger step sizes to attain the optimal convergence rate. Specifically, the parameters are chosen as
\begin{align} \label{eq:para-Optimal-MVI}
    q = \frac{3p-1}{2}, \quad  \eta_0 = \frac{1}{L_p D^{p-1}}, \quad \beta_t = \frac{q}{t+q}, \quad \eta_t = \eta_{t-1} \frac{t+ 2q -1}{t+q} ~~ (t \ge 1),
\end{align}
where $D = \left\|\vx_{0}-\vx^*\right\|$ is the distance of the initial point $\vx_0 \in \gX$ to the optimal solution $\vx^*$. 

As in the classical Halpern iteration \citep{halpern1967fixed,lieder2020convergence} that is applied to a non-expansive operator, we follow \citet{chen2026halpern} to use an anchored tensor method (ATM) as a subroutine to approximate the non-expansive resolvent operator $P_{\eta_t}(\vx_t)$ in Definition \ref{dfn:resolvent} at each iteration. Unlike the original ATM, which relies on the distance $R$ from the initialization to the optimal point, $R$ remains unknown in our problem setup. To address this, we adopt a modified version of ATM that is adaptive to $R$. The remaining parameter~$M$ corresponds to the regularization parameter in the tensor step, and is set as $M = \gO(L_p)$ following the literature \citep{lin2022perseus,bullins2022higher,jiang2022generalized,huang2022approximation}.
The error condition of our resolvent sub-solver is stated as follows.




\begin{cond} \label{cond:subroutine}
For $0\le t<T$, the \textsf{$R$-Adaptive-ATM} subroutine always returns $\vu_t \in \gX$ and $\vn_t \in \gN_\gX(\vu_t)$ satisfying 
\[
\| \mG_t(\vu_t) + \vn_t \| \le \frac{\delta_T}{\eta_t}, \quad \delta_T:= \frac{D}{T^{3/2}},
\]
where $D = \left\|\vx_{0}-\vx^*\right\|$ is the distance of the initial point $\vx_0 \in \gX$ to the optimal solution $\vx^*$.
\end{cond}

Under the above absolute error condition for approximating the resolvent operator, we can then obtain the following convergence rate of $\gO(T^{-(3p-1)/2})$ for our method. The key in our analysis is proving the convergence rate via the following Lyapunov function:
\begin{align} \label{eq:dfn-Pt}
    P_t =  2 B_t \langle \vv_t , \vu_t - \vx_0 \rangle + C_{t+1} \| \vv_t \|^2,
\end{align}
where the sequences $B_t$ and $C_t$ are defined in \eqref{eq:B-C}. This Lyapunov function is modified from the standard ones used in analyzing Halpern iterations \citep{diakonikolas2020halpern,cai2024variance}. By establishing a recursion for $P_t$ and $P_{t-1}$, we then arrive at the following theorem.


\begin{thm} \label{thm:outer-loop}
Under Assumptions \ref{asm:X}-\ref{asm:pth-smooth}, if Algorithm \ref{alg:inexact-halpern-anchored} uses parameters (\ref{eq:para-Optimal-MVI}) and satisfies Condition \ref{cond:subroutine}, then there exists a constant $c_p$ that depends on $p$ such that
\[
\| \vv_{T-1} \| \le \frac{c_p D}{\eta_0 ( T+1)^{\frac{3p-1}{2}}}, \quad \vv_{T-1} = \mF(\vu_{T-1}) + \vn_{T-1}.
\]
\end{thm}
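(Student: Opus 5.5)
The plan is a potential-function argument in the style of the Halpern analyses of \citet{diakonikolas2020halpern}, built on the Lyapunov function $P_t$ in \eqref{eq:dfn-Pt}. The aim is to show that $P_t - P_{t-1}$ is bounded by a small error term coming from the inexactness in Condition \ref{cond:subroutine}.

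\textbf{Step 1: approximate resolvent relation.} Write the exact-resolvent relation satisfied by $\vu_t$. Let $\ve_t := \mG_t(\vu_t)+\vn_t$, so that Condition \ref{cond:subroutine} gives $\|\ve_t\|\le\delta_T/\eta_t$. By the definition of $\mG_t$,
\[
\vu_t = \vx_t - \eta_t \vv_t + \eta_t \ve_t ,
\]
with $\vv_t=\mF(\vu_t)+\vn_t\in(\mF+\gN_\gX)(\vu_t)$. This shows that $\vu_t$ is an inexact proximal-point step from $\vx_t$. Combined with $\vx_t=(1-\beta_t)\vu_{t-1}+\beta_t\vx_0$, it gives
\[
\vu_t-\vu_{t-1} = \beta_t(\vx_0-\vu_{t-1}) - \eta_t\vv_t + \eta_t\ve_t .
\]

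\textbf{Step 2: monotonicity and the sequences $B_t,C_t$.} Apply monotonicity of $\mF+\gN_\gX$ between $\vu_t$ and $\vu_{t-1}$:
\[
\langle \vv_t-\vv_{t-1},\vu_t-\vu_{t-1}\rangle\ge0 .
\]
Multiply this by a weight ($\propto B_t/\beta_t$) and add it to $P_{t-1}-P_t$. The sequences are chosen so that the cross terms cancel: $B_t=B_{t-1}/(1-\beta_t)$, which gives $B_t\asymp t^{q}$ with $\beta_t=q/(t+q)$. $C_{t+1}$ is then chosen so that the quadratic terms in $\vv_t,\vv_{t-1}$ form a nonpositive quadratic form. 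The step size $\eta_t$ grows like $t^{q-1}$, since $\eta_t/\eta_{t-1}=(t+2q-1)/(t+q)$ gives $\eta_t\asymp\eta_0 t^{q-1}$. Hence $C_{t+1}\asymp B_t\eta_t\asymp t^{2q-1}$.

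The role of the auxiliary point $\vy_t$ is purely inner-loop: it controls the ATM cost. It does not enter the outer analysis except through Condition \ref{cond:subroutine}.

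\textbf{Step 3: error terms and the final bound.} The resulting recursion has the form
\[
P_t\le P_{t-1} + c\,B_t\,\eta_t\|\ve_t\|\bigl(\|\vv_t\|+\|\vu_t-\vx_0\|/\eta_t\bigr).
\]
This error can be absorbed using:
\begin{itemize}
\item Young's inequality against the $C_{t+1}\|\vv_t\|^2$ slack;
\item the bound $\|\vu_t-\vx^*\|\lesssim D$, proved inductively via nonexpansiveness (Proposition \ref{prop:resolvent-geom}) plus the error $\delta_T$.
\end{itemize}
Since $\eta_t\|\ve_t\|\le\delta_T=D/T^{3/2}$, the summed error over $T$ steps is at most about $\sum_t B_t D\delta_T/\eta_t\cdot\eta_t\dots$. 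The choice $\delta_T=DT^{-3/2}$ is calibrated so that the total is $\lesssim B_T D^2/\eta_T\cdot$const.

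Finally, lower-bound $P_{T-1}$ using monotonicity against $\vx^*$: $\langle\vv_t,\vu_t-\vx^*\rangle\ge0$, so
\[
2B\langle\vv,\vu-\vx_0\rangle\ge -2B\|\vv\|D .
\]
This yields $C_T\|\vv\|^2-2B_{T-1}D\|\vv\|\le P_0+\text{err}$. Solving the quadratic gives $\|\vv_{T-1}\|\lesssim B_T D/C_T\asymp D/(\eta_0T^{q})$, which is the claimed rate.

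\textbf{Main obstacle.} The hard step is verifying Step 2: that, with these specific $\beta_t,\eta_t$, a choice of $C_{t+1}$ exists making the quadratic form nonpositive while keeping $C_{t+1}\asymp t^{2q-1}$. Standard Halpern uses constant $\eta$. Here the growing $\eta_t$ must be exactly compatible with $B_t$, which I expect forces the ratio $(t+2q-1)/(t+q)$. I would first try the ansatz $C_{t+1}=B_t\eta_t/\beta_t\cdot\beta_{t+1}$-type, i.e.\ $C_{t+1}\propto B_t\eta_t(t+q)/q$, and check that the coefficient identity telescopes.
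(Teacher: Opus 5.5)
There is a genuine gap. Your skeleton is the paper's: the same $P_t$, $B_t(1-\beta_t)=B_{t-1}$, monotonicity between $\vu_t$ and $\vu_{t-1}$ with weight $2B_{t-1}/\beta_t$, the nonexpansiveness bound on $\|\vu_t-\vx^*\|$, and a final quadratic. Your lower bound via $\langle \vv_t,\vu_t-\vx^*\rangle\ge 0$ is also fine. The problem is that the step you flag as the main obstacle is set up with an infeasible target.

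\textbf{The scaling of $C_t$.} The weight $2B_{t-1}/\beta_t$ is forced by cancelling the $\langle \vw_t,\vu_{t-1}-\vx_0\rangle$ term, where $\vw_t=\vv_t-\vv_{t-1}$. With that weight, the $\vv$-quadratic in $P_t-P_{t-1}$ is
\[
(C_{t+1}-2B_t\eta_t-2K_t)\|\vv_t\|^2+2K_t\langle \vv_t,\vv_{t-1}\rangle-C_t\|\vv_{t-1}\|^2,\qquad K_t:=\frac{B_{t-1}\eta_t}{\beta_t}\asymp\eta_0 t^{2q}.
\]
Nonpositivity of this form requires $C_t\ge K_t^2/(2K_t+2B_t\eta_t)\gtrsim\eta_0 t^{2q}$. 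So $C_{t+1}\asymp t^{2q-1}$ is impossible. Your final step $B_TD/C_T\asymp D/(\eta_0T^q)$ silently uses $C_T\asymp\eta_0T^{2q}$, which contradicts your Step 2; with $C_T\asymp\eta_0T^{2q-1}$ it would give only $T^{-(q-1)}$.

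The missing identity is this. Set $C_t=K_t=B_t\eta_t t/q$. Then the step-size rule $\eta_t/\eta_{t-1}=(t+2q-1)/(t+q)$ is exactly equivalent to $C_{t+1}=C_t+2B_t\eta_t$, i.e.\ $C_{t+1}=B_t\eta_t(t+2q)/q$, not your ansatz $B_t\eta_t(t+q)/q$. With this choice the quadratic collapses to exactly $-C_t\|\vw_t\|^2$.

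\textbf{The error bookkeeping.} Normalize $\ve_t:=\vu_t-\vx_t+\eta_t\vv_t$ (your $\eta_t\ve_t$), so $\|\ve_t\|\le\delta_T$. The exact error terms are then $(2C_t/\eta_t)\langle\vw_t,\ve_t\rangle+2B_t\langle\vv_t,\ve_t\rangle$.

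The first term has coefficient of order $tB_t$ and is missing from your recursion. There is also no $C_{t+1}\|\vv_t\|^2$ slack to absorb anything into; the only slack is $-C_t\|\vw_t\|^2$. Young's inequality against it leaves $2(C_t/\eta_t^2)\|\ve_t\|^2$, and $\sum_t (C_t/\eta_t^2)\|\ve_t\|^2\lesssim \eta_0^{-1}T^3\delta_T^2=D^2/\eta_0$. This, together with $\sum_t\|\ve_t\|\le T\delta_T\le D$, is what $\delta_T=DT^{-3/2}$ is calibrated for: the total error must be $O(D^2/\eta_0)$, independent of $T$. Your budget $B_TD^2/\eta_T\asymp TD^2/\eta_0$ would lose a factor $\sqrt T$ in the rate. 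Your term $B_t\|\ve_t\|\|\vu_t-\vx_0\|/\eta_t$ is only bounded by order $D^2\sqrt T/\eta_0$ after summation, which is again too large.

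\textbf{Closing the argument.} The linear term $2B_t\langle\vv_t,\ve_t\rangle$ involves every $\vv_t$, so solving a quadratic at $t=T-1$ alone does not finish the proof. You need a self-bounding step, such as the paper's $V=\sup_{t<T}\sqrt{C_{t+1}}\|\vv_t\|$ and $Q=\sup_{t<T}B_t/\sqrt{C_{t+1}}=O(\eta_0^{-1/2})$. These give $\sum_t 2B_t\|\vv_t\|\|\ve_t\|\le 2QVD$. Applying the lower bound on $P_t$ at the index attaining $V$ then yields $V^2\lesssim QDV+D^2/\eta_0$, hence $\|\vv_{T-1}\|\le V/\sqrt{C_T}\lesssim D/(\eta_0T^q)$.
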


\begin{proof}
Let $q = (3p-1)/2$. For the Lyapunov function in \eqref{eq:dfn-Pt}, we define the following sequences 
\begin{align} \label{eq:B-C}
B_t = \frac{\Gamma (t+q+1)}{\Gamma (q+1) \Gamma (t+1)}, \quad C_t = \frac{B_t \eta_t (1- \beta_t)}{\beta_t} = B_t \eta_t\frac{t}{q}\quad(t\ge0).
\end{align}
Using Stirling's approximation that $\Gamma(t+a) / \Gamma(t+c) \asymp t^{a-c}$, we can derive that 
\begin{align*}
\eta_t =& \eta_0 \prod_{j=1}^t \frac{j+ 2q-1}{j+q} = \eta_0 \frac{\Gamma(t+2q) \Gamma(q+1)}{\Gamma(2q) \Gamma(t+q+1)} \asymp_q \eta_0 (t+q)^{q-1}, \\    
B_t &\asymp_q (t+q)^q, \quad  C_t \asymp_q \eta_0(t+q)^{2q}\quad(t\ge1).
\end{align*}
For $\vu_t$ and $\vn_t \in \gN_\gX(\vu_t)$ that satisfy Condition \ref{cond:subroutine}, we define
\[
\vv_t := \mF(\vu_t) + \vn_t, \quad \ve_t := \vu_t - \vx_t + \eta_t \vv_t.
\]
The error condition \ref{cond:subroutine} ensures that
\begin{align} \label{eq:error-e}
    \sum_{t=0}^{T-1} \| \ve_t \| \le T \delta_T = \frac{D}{\sqrt{T}} \le D.
\end{align}
Let $\mP_\eta  = ({\rm Id} + \eta ( \mF + \gN_\gX ))^{-1}$ be the resolvent operator. Therefore, in our notation, we have $ \vu_t = \mP_{\eta_t} (\vx_t + \ve_t) $. The same nonexpansiveness holds for inputs in $\mathbb R^d$, including $\vx_t+\ve_t$. Then, using the nonexpansiveness of the resolvent operator in Proposition \ref{prop:resolvent-geom} and the fact that $\mP_{\eta_t}(\vx^*) = \vx^*$, we have \[
\| \vu_t - \vx^* \| \le \| \vx_t - \vx^* \| + \| \ve_t \| \le (1 - \beta_t) \| \vu_{t-1} - \vx^* \| + \beta_t \| \vx_0 - \vx^* \| + \| \ve_t \|\quad(t\ge1).
\]
At $t=0$, $\|\vu_0-\vx^*\|\le D+\|\ve_0\|$. Therefore, we can prove by induction that 
\[
\| \vu_t - \vx^* \| \le \| \vx_0 - \vx^* \| + \sum_{j=0}^t \| \ve_j \|.
\]
Using inequality (\ref{eq:error-e}), we have
\begin{align} \label{eq:ub-D}
\| \vu_t - \vx^* \| \le 2 D, \quad \| \vu_t - \vx_0 \| \le 3 D.    
\end{align}
At initialization ($t=0$), we have $\beta_0 = B_0 =  1$ and $C_1 = 2 \eta_0$. Therefore, it is easy to verify that \(P_0 = 2 \langle \vv_0, \ve_0 \rangle\). Then, we have
\begin{align*}
\eta_0 \| \vv_0 \|
=& \| \ve_0 - (\mP_{\eta_0}(\vx_0 + \ve_0) - \mP_{\eta_0}(\vx_0)) + (\vx_0 -   \mP_{\eta_0}(\vx_0)) \|  \\
\le& (\| \ve_0 \| + \|\mP_{\eta_0}(\vx_0 + \ve_0) - \mP_{\eta_0}(\vx_0)\| + \|\vx_0 -   \mP_{\eta_0}(\vx_0) \|)\\
\le& ( 2\| \ve_0 \| + \| \vx_0 - \vx^* \| ) \le  3 D,
\end{align*}
where the first inequality is due to the triangle inequality, and the second one uses the non-expansiveness of the resolvent operator in Proposition \ref{prop:resolvent-geom}, and 
\( \|\vx_0 -   \mP_{\eta_0}(\vx_0) \| \le \| \vx_0 - \vx^* \| \) due to \citep[Lemma 5.2]{chen2026halpern}. Therefore, we have
\begin{align} \label{eq:ub-P0}
    P_0 = 2 \langle \vv_0, \ve_0 \rangle \le 6 D^2 / \eta_0.
\end{align}
For $t \ge 1$, we define the following differences of vectors as
\[
\vw_t = \vv_t - \vv_{t-1}, \quad 
\vd_t = \vu_t - \vu_{t-1} =  - \beta_t (\vu_{t-1} - \vx_0) - \eta_t \vv_t + \ve_t.
\]
Then, the monotonicity of $\mF+\gN_\gX$ implies that $ \langle \vw_t , \vd_t \rangle \ge 0$. Therefore, 
\begin{align*}
    P_t - P_{t-1} =&  -C_t \| \vw_t \|^2 - \frac{2B_{t-1}}{\beta_t} \langle \vw_t, \vd_t \rangle + \frac{2 C_t}{\eta_t} \langle \vw_t, \ve_t \rangle + 2 B_t \langle \vv_t, \ve_t \rangle \\
\le& - \frac{1}{2} C_t\| \vw_t \|^2 + \frac{2 C_{t}}{\eta_t^2} \| \ve_t \|^2 + 2 B_t \| \vv_t \| \| \ve_t \| \\
=& - \frac{1}{2} C_t\| \vw_t \|^2 + \frac{2 C_{t}}{\eta_t^2} \| \ve_t \|^2  + 2 \frac{B_t}{\sqrt{C_{t+1}}} \cdot \sqrt{C_{t+1}} \| \vv_t \| \| \ve_t \|,
\end{align*}
where the inequality uses Young's inequality that
\[
\frac{2C_t}{\eta_t} \left | \langle \vw_t, \ve_t \rangle  \right| \le \frac{1}{2} C_t \| \vw_t \|^2 + \frac{2C_t}{\eta_t^2} \| \ve_t \|^2.
\]
Define
\[
Q = \sup_{0 \le t < T} \frac{B_t}{\sqrt{C_{t+1}}}, \quad V = \sup_{0 \le t < T} \sqrt{C_{t+1}} \| \vv_t \|.
\]
Then, telescoping the above recursive inequality of $P_t - P_{t-1}$ gives
\[
P_t \le P_0 + 2 \sum_{j=1}^{t} \frac{C_j}{\eta_j^2} \| \ve_j \|^2 + 2 Q V \sum_{j=1}^{t} \| \ve_j \|.
\]
The above inequality gives an upper bound on $P_t$. For $t = \arg \max_{0 \le t < T} \sqrt{C_{t+1}} \| \vv_t\|$, that is, the index attaining $V$,  we can also combine Eqs. (\ref{eq:ub-D}) and (\ref{eq:dfn-Pt}) to obtain
\[
P_t \ge V^2 - 6 Q D V
\]
The above two inequalities, in conjunction with inequality (\ref{eq:error-e}), imply 
\begin{align*}
    V^2 \le 8 Q DV + P_0 + 2 \sum_{t=0}^{T-1} \frac{C_t}{\eta_t^2} \| \ve_t \|^2.
\end{align*}
Recalling that $C_t \asymp_q \eta_0 (t+q)^{2q} $ and  \( \eta_t \asymp_q \eta_0 (t+q)^{q-1} \), we have $C_t  / \eta_t^2 \asymp_q (t+q)^2/ \eta_0$. Since $\|\ve_t\|\le D/T^{3/2}$ and $t+q\le q(t+1)$, we also have
\begin{align} \label{eq:ub-C-eta-e}
\sum_{t=1}^{T-1}(t+q)^2\|\ve_t\|^2\le q^2D^2,\qquad
\sum_{t=1}^{T-1}\frac{C_t}{\eta_t^2}\|\ve_t\|^2= \gO_q \left({D^2}/{\eta_0} \right).    
\end{align}
Thus, there exists a constant $c_q$ that depends on $q$ such that
\[
V^2 \le c_q D^2 / \eta_0 + 8 QDV.
\]
Solving this quadratic inequality gives 
\[
V \le 8 QD + D \sqrt{{c_q}/{\eta_0}}.
\]
Recalling that $Q = \sup_{0 \le t < T}{B_t}/{\sqrt{C_{t+1}}}$ and $B_t \asymp_q (t+q)^q$, we have $Q = \gO_q(1 / \sqrt{\eta_0})$. Therefore, we can conclude that there exists a constant $c_q'$ such that
\begin{align} \label{eq:ub-V}
V \le c_q' D /\sqrt{\eta_0}.    
\end{align}
This completes the proof by noting the definitions of $q = (3p-1)/2$, $V = \sup_{0 \le t<T} \sqrt{C_{t+1}} \| \vv_t\| $, and $C_t \asymp_q  \eta_0 (t+q)^{2q}$.
\end{proof}

\section{Inner Loop: Anchored Tensor Method}

In this section, we introduce a subroutine that can solve the resolvent operator that achieves Condition \ref{cond:subroutine} required by Algorithm \ref{alg:inexact-halpern-anchored}. Recall that the resolvent in Definition \ref{dfn:resolvent} is \(\mP_\eta(\vx) = ({\rm Id} + \eta (\mF + \gN_\gX ))^{-1} (\vx)\). Equivalently, the proximal point $\vy=\mP_\eta(\vx)$ is the unique solution of
\begin{equation} \label{eq:optimality-resolvent}
    \vzero \in \mF(\vy)+ \frac{1}{\eta} (\vy-\vx)+\gN_{\gX}(\vy).
\end{equation}
For a fixed $\vx$, the above problem is equivalent to solving the MVI problem induced by the regularized operator $\mG(\vy) := \mF(\vy) + (\vy  -\vx) / \eta$, which is $\mu $-strongly monotone for $\mu = 1/\eta$:
\begin{equation} \label{eq:regularized-strongly-monotone}
    \langle \mG(\vy) -  \mG(\vy'), \vy - \vy' \rangle \ge \mu \|\vy - \vy' \|^2, \qquad \forall \vy,\vy' \in \gX.
\end{equation}
Therefore, we only need to consider solving the MVI problem induced by a strongly monotone operator.

\subsection{Control of Total Distances}
When solving the MVI induced by the operator $\mG_t(\vx) = \mF(\vx) + \eta_t^{-1}( \vx - \vx_t)$ in Algorithm \ref{alg:inexact-halpern-anchored}, our sub-solver is initialized at $\vy_t$. Hence, the complexity of solving the resolvent operator depends on $r_t=\|\vy_t-\mP_{\eta_t}(\vx_t)\|$, the distance of $\vy_t$ to the proximal point $\mP_{\eta_t}(\vx_t)$. To control the total complexity of solving the subproblems, we need the following technical lemma that bounds the total initial distances, which is also a consequence of the Lyapunov analyses in the proof of Theorem \ref{thm:outer-loop}.

\begin{lem}\label{lem:predictor-energy}
    Under the conditions of Theorem \ref{thm:outer-loop}, let $r_t=\|\vy_t-\mP_{\eta_t}(\vx_t)\|$. There exists a constant $c_p'$ that depends on $p$ such that
    \[
    \sum_{t=1}^{T-1} (t+q)^2 r_t^2 \le c_p' D^2,
    \]
where $D = \left\|\vx_{0}-\vx^*\right\|$ is the distance of the initial point $\vx_0 \in \gX$ to the optimal solution $\vx^*$.
\end{lem}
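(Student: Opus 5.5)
Fix $t\ge1$ and let $\vp_t=\mP_{\eta_t}(\vx_t)$. The plan is to bound $r_t$ by $\eta_t\|\vw_t\|$ plus error terms, and then use the $-\tfrac12 C_t\|\vw_t\|^2$ term in the Lyapunov recursion of Theorem \ref{thm:outer-loop} to control $\sum_t C_t\|\vw_t\|^2$.

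\textbf{Step 1: reduce $r_t$ to $\|\vw_t\|$.} The point $\vp_t$ is a fixed point of the map $\vz\mapsto{\rm Proj}_\gX(\vx_t-\eta_t\mF(\vz))$; this follows from \eqref{eq:optimality-resolvent}. Since $\vu_{t-1}\in\gX$ and $\vn_{t-1}\in\gN_\gX(\vu_{t-1})$, we have ${\rm Proj}_\gX(\vu_{t-1}+\eta_t\vn_{t-1})=\vu_{t-1}$. It is easier to use the resolvent characterization directly. The point $\vu_t$ satisfies $\vu_t=\mP_{\eta_t}(\vx_t+\ve_t)$, so $\|\vu_t-\vp_t\|\le\|\ve_t\|$. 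It therefore suffices to bound $\|\vy_t-\vu_t\|$.

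Write $\vu_t={\rm Proj}_\gX(\vu_t+\eta_t\vn_t)$. We have $\vu_t+\eta_t\vn_t=\vx_t+\ve_t-\eta_t\mF(\vu_t)$, by the definition of $\ve_t$. Also $\vy_t={\rm Proj}_\gX(\vx_t-\eta_t\mF(\vu_{t-1})-\eta_t\vn_{t-1})$. Nonexpansiveness of the projection then gives
\[
\|\vy_t-\vu_t\|\le\|\ve_t\|+\eta_t\|\mF(\vu_t)-\mF(\vu_{t-1})-\vn_{t-1}\|.
\]
This still contains a term in $\vn_{t-1}$, which is not what we want. A cleaner route is to write $\vu_t=\mP_{\eta_t}(\vx_t+\ve_t)={\rm Proj}_\gX(\vx_t+\ve_t-\eta_t\mF(\vu_t))$. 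Using the normal-cone term directly, $\vu_t+\eta_t\vn_t-\eta_t\vn_t$ compared with $\vx_t-\eta_t\vv_{t-1}$ yields
\[
\|\vy_t-\vu_t\|\le\|(\vx_t-\eta_t\vv_{t-1})-(\vu_t+\eta_t\vn_t)\|\quad\text{up to an } \eta_t\|\mF(\vu_t)-\mF(\vu_{t-1})\|\text{-type term.}
\]
To avoid this issue, I will use the simple identity
\[
\vx_t-\eta_t\vv_{t-1}=\vu_t+\eta_t\vw_t-\ve_t.
\]
Then $\vy_t={\rm Proj}_\gX(\vu_t+\eta_t\vn_t+\eta_t\mF(\vu_t)+\eta_t\vw_t-\ve_t-\eta_t\mF(\vu_t)-\eta_t\vn_t)$. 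I expect the honest bound to be
\[
r_t\lesssim\eta_t\|\vw_t\|+\|\ve_t\|,
\]
obtained by comparing $\vy_t$ with ${\rm Proj}_\gX(\vx_t-\eta_t\vv_t)$ and exploiting that $\vv_t$ is in $\mF(\vu_t)+\gN_\gX(\vu_t)$. Making this step rigorous, with the projection interacting with the normal-cone component, is the main obstacle. The fallback is a direct argument via \eqref{eq:optimality-resolvent} and strong monotonicity of $\mG_t$: test the inclusion at $\vp_t$ against the projection inequality defining $\vy_t$, and absorb the cross terms with Young's inequality. This should yield $\|\vy_t-\vp_t\|\le\eta_t\|\vw_t\|+2\|\ve_t\|$, possibly with an additional Lipschitz-free term handled by monotonicity.

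\textbf{Step 2: weighted sum.} Squaring Step 1 and multiplying by $(t+q)^2$ gives
\[
(t+q)^2 r_t^2\lesssim(t+q)^2\eta_t^2\|\vw_t\|^2+(t+q)^2\|\ve_t\|^2 .
\]
Since $\eta_t\asymp\eta_0(t+q)^{q-1}$ and $C_t\asymp\eta_0(t+q)^{2q}$, we have $(t+q)^2\eta_t^2\asymp\eta_0\,C_t$. The error part is at most $q^2D^2$ by \eqref{eq:ub-C-eta-e}.

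\textbf{Step 3: Lyapunov bound.} Retain the $-\tfrac12C_t\|\vw_t\|^2$ term in the telescoped recursion from the proof of Theorem \ref{thm:outer-loop}. This gives
\[
\tfrac12\sum_{t=1}^{T-1}C_t\|\vw_t\|^2\le P_0-P_{T-1}+2\sum_t\frac{C_t}{\eta_t^2}\|\ve_t\|^2+2QV\sum_t\|\ve_t\|.
\]
The terms on the right are bounded as follows:
\begin{itemize}[leftmargin=*]
\item $-P_{T-1}\le 6QDV$, by \eqref{eq:ub-D};
\item $QV=\gO_q(D/\eta_0)$, by \eqref{eq:ub-V};
\item $P_0\le6D^2/\eta_0$, by \eqref{eq:ub-P0};
\item the error terms are $\gO_q(D^2/\eta_0)$, by \eqref{eq:ub-C-eta-e} and \eqref{eq:error-e}.
\end{itemize}
Hence $\sum_t C_t\|\vw_t\|^2=\gO_q(D^2/\eta_0)$. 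Multiplying by $\eta_0$ gives $\sum_t(t+q)^2\eta_t^2\|\vw_t\|^2=\gO_q(D^2)$, which proves the lemma.
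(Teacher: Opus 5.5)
Your Steps 2 and 3 follow the paper's argument. However, Step 1, the only ingredient specific to this lemma, is never proved. You say you \emph{expect} $r_t\lesssim\eta_t\|\vw_t\|+\|\ve_t\|$ and call making it rigorous ``the main obstacle.'' Your fallback only ``should yield'' the bound, ``possibly with an additional term.'' As written, the proposal does not establish the lemma.

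The obstacle comes from how you represent $\vu_t$. Each time you compare $\vy_t$ with $\vu_t$, you write $\vu_t$ as $\Proj_\gX(\vu_t+\eta_t\vn_t)$, or as a projection of $\vx_t+\ve_t-\eta_t\mF(\vu_t)$. Both reintroduce a stray normal-cone vector. The missing observation is just that $\vu_t\in\gX$, so $\vu_t=\Proj_\gX(\vu_t)$. You already derived the identity $\vx_t-\eta_t\vv_{t-1}=\vu_t+\eta_t\vw_t-\ve_t$, so nonexpansiveness of the projection gives
\[
\|\vy_t-\vu_t\|=\|\Proj_\gX(\vx_t-\eta_t\vv_{t-1})-\Proj_\gX(\vu_t)\|\le\|\eta_t\vw_t-\ve_t\|.
\]
Combined with $\|\vu_t-\mP_{\eta_t}(\vx_t)\|\le\|\ve_t\|$, which you did state, this yields $r_t\le\eta_t\|\vw_t\|+2\|\ve_t\|$. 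The normal-cone components already sit inside $\vv_{t-1}$ and $\vv_t$ and are absorbed into $\vw_t$. No Lipschitz, strong-monotonicity, or Young's-inequality argument is needed. Your other suggestion, comparing with $\Proj_\gX(\vx_t-\eta_t\vv_t)=\Proj_\gX(\vu_t-\ve_t)$, also works for the same reason, since $\Proj_\gX(\vu_t)=\vu_t$. This one-line step is exactly what the paper does.

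Once Step 1 is in place, the rest of your argument is correct and matches the paper, with one harmless difference in the lower bound on $P_{T-1}$. You drop $C_T\|\vv_{T-1}\|^2\ge0$ and use $\|\vu_{T-1}-\vx_0\|\le3D$ to get $-P_{T-1}\le6QDV$. The paper instead completes the square to get $P_{T-1}\ge-9Q^2D^2$. Both are $\gO_q(D^2/\eta_0)$. Note that concluding $QV=\gO_q(D/\eta_0)$ requires $Q=\gO_q(\eta_0^{-1/2})$ in addition to the bound on $V$.
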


\begin{proof}
Recalling the definition of $P_t$ in Eq. (\ref{eq:dfn-Pt}), we have
\begin{align*}
    P_t =&  2 B_t \langle \vv_t , \vu_t - \vx_0 \rangle + C_{t+1} \| \vv_t \|^2 \\
    =& C_{t+1} \left \| \vv_t + \frac{B_t}{C_{t+1}} (\vu_t - \vx_0) \right\|^2 - \frac{B_t^2}{C_{t+1}} \| \vu_t - \vx_0 \|^2 \ge - \frac{B_t^2}{C_{t+1}} \| \vu_t - \vx_0 \|^2.
\end{align*}
Therefore, using \( \| \vu_t - \vx_0 \| \le 3 D \) from inequality (\ref{eq:ub-D}) and the definition $Q = \sup_{0 \le t < T}{B_t}/{\sqrt{C_{t+1}}}$,  we can obtain that $P_{T-1}\ge-9Q^2D^2$. Keeping the negative term in the recursive inequality for $P_t-P_{t-1}$ in the proof of Theorem \ref{thm:outer-loop}, we obtain
\begin{align} \label{eq:ub-12Cw}
\frac12\sum_{t=1}^{T-1}C_t\|\vw_t\|^2
&\le P_0-P_{T-1}+2\sum_{t=1}^{T-1}\frac{C_t}{\eta_t^2}\|\ve_t\|^2
+2QV\sum_{t=1}^{T-1}\|\ve_t\| = \gO_q \left( D^2/\eta_0 \right),
\end{align}
where we used the upper bound on $P_0$ in inequality (\ref{eq:ub-P0}), $Q = \gO_q(1 / \sqrt{\eta_0})$, the upper bound on $V$ in inequality~(\ref{eq:ub-V}), the upper bound on $\sum_{t=1}^{T-1} \| \ve_t\|$ in inequality~(\ref{eq:error-e}), and that of $\sum_{t=1}^{T-1} (C_t / \eta_t^2) \| \ve_t \|^2 $ in inequality~(\ref{eq:ub-C-eta-e}).
For $t\ge1$, non-expansiveness of the resolvent gives
$\|\vu_t-\mP_{\eta_t}(\vx_t)\|\le\|\ve_t\|$.
Since $\vu_t=\Proj_\gX(\vu_t)$, non-expansiveness of projection also gives
\[
\|\vu_t-\vy_t\|\le\|\vu_t-\vx_t+\eta_t\vv_{t-1}\|
=\|\ve_t-\eta_t\vw_t\|.
\]
Thus $r_t\le\eta_t\|\vw_t\|+2\|\ve_t\|$, and consequently
\begin{align*}
\sum_{t=1}^{T-1}(t+q)^2r_t^2 \le2\sum_{t=1}^{T-1}(t+q)^2\eta_t^2\|\vw_t\|^2
+8\sum_{t=1}^{T-1}(t+q)^2\|\ve_t\|^2.
\end{align*}
Since $(t+q)^2\eta_t^2 = \gO_q (\eta_0C_t)$ for $t\ge1$, we can combine inequalities (\ref{eq:ub-C-eta-e}) and (\ref{eq:ub-12Cw}) to obtain that
\[
\sum_{t=1}^{T-1}(t+q)^2r_t^2 \le c_q\eta_0\sum_{t=1}^{T-1}C_t\|\vw_t\|^2+8q^2D^2
\le c_qD^2,
\]
where $c_q$ is a constant that only depends on $q$.
\end{proof}

The above technical lemma almost implies that each resolvent operator can be solved in $\gO(1)$ amortized costs. By the existing guarantee from the ATM sub-solver by \citet[Theorem 5.1]{chen2026halpern}, each resolvent operator can be efficiently solved in the costs of roughly $\gO( r_t/ \rho_t)$, where $\rho_t \asymp_p (\eta_t L_p)^{-1/(p-1)} \asymp_p D (t+1)^{3/2}$ is linear convergence region of tensor step by \citet[Lemma 5.1]{chen2026halpern}. Then we can deduce that the total costs of solving the resolvents are $\gO(T)$ since, for any integer $p$, 
\[
\sum_{t=1}^{T-1}\frac{r_t}{\rho_t} = \gO \left(\frac{1}{D}\sum_{t=1}^{T-1}(t+q)^{3/2}r_t \right)= \gO \left(\frac{1}{D}\left(\sum_{t=1}^{T-1}(t+q)^2r_t^2\right)^{1/2}
\left(\sum_{t=1}^{T-1}(t+q)\right)^{1/2} \right) = \gO(T).
\]
See also \eqref{eq:rt-rhot} below. Now, let us formally recall the ATM sub-solver and its theoretical guarantees.

\subsection{The Anchored Tensor Method}

Our subroutine to solve the resolvent operator is a simple modification of the Anchored Tensor Method (ATM) of \citet[Algorithm 3]{chen2026halpern}. We recall their method as follows. In this method, $M = \gO(L_p)$ is the regularization parameter in the tensor step, $R>0$ is an upper bound of the initial distance, and the value of $S$ will specified later in the next subsection. 

\begin{algorithm}[htbp] 
\caption{\textsf{Anchored-Tensor-Method}$ (\mG,\vy_0, \mu, M, R, S)$}  \label{alg:anchoring}
\begin{algorithmic}[1] 
\State  Set $K=\min\{k\ge0:r_k=\rho(\mu)\}$ using \eqref{eq:rj-muj} 
\State \textbf{for} $k = 0,\cdots, K-1$ \hfill \Comment{Phase I: Enter the local region.}
\State \quad Select regularization $\mu_k$ according to \eqref{eq:rj-muj}.
\State \quad Define the regularized operator $\mG_k(\vy) = \mG(\vy) + (\mu_k - \mu) (\vy - \vy_0)$
\State \quad Perform a tensor step $\vy_{k+1} = \gT_{\mG_k}^p (\vy_k;M)$
\State \textbf{end for}
\State \textbf{for} $s = K,\cdots, K+S-1$ \hfill \Comment{Phase II: Linear convergence.}
\State \quad Perform a tensor step $\vy_{s+1} = \gT_{\mG}^p (\vy_s;M)$
\State \textbf{end for}
\State \textbf{return} $\vy_{K+S}$
\end{algorithmic}
\end{algorithm}

The starting point of the ATM subroutine is the following fact, which essentially follows from \citet[Theorem 3.5]{lin2022perseus}, which suggests that the tensor steps in Definition \ref{dfn:mvi-tensor-step} enjoy a linear convergence guarantee in the local region with radius $\rho(\mu)=a_p(\mu/L_p)^{1/(p-1)}$, where $a_p$ is a constant that depends on $p$.

\begin{lem}[{\citet[Lemma 5.1]{chen2026halpern}}] \label{lem:local-region}
For any integer $p \ge 2$, let $\mG$ be $\mu$-strongly monotone and $p$th-order $L_p$-smooth, with $\vzero\in\mG(\vy^*)+\gN_\gX(\vy^*)$.
    There exist $p$-dependent constants $a_p>0$, $m_p\ge1$, and $0<\theta_p<1$ such that, for $M=m_pL_p$, we have
    \begin{equation}\label{eq:atm-local}
    \|\vy-\vy^*\|\le\rho(\mu) \triangleq a_p(\mu/L_p)^{1/(p-1)} \quad\Longrightarrow\quad
    \|\gT_{\mG}^p(\vy;M)-\vy^*\|\le\theta_p\|\vy-\vy^*\|.
\end{equation}
\end{lem}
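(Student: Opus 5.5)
Since $\mG$ is $\mu$-strongly monotone with solution $\vy^*$, I will compare the tensor step's inclusion with the exact inclusion at $\vy^*$. Fix $\vy$ with $r:=\|\vy-\vy^*\|\le\rho(\mu)$ and set $\vy^+=\gT_\mG^p(\vy;M)$. The first step is to combine the Taylor-model inclusion of Definition~\ref{dfn:mvi-tensor-step} with the remainder bound used in Lemma~\ref{lem:tensor-residual-certificate}. This gives $\vn^+\in\gN_\gX(\vy^+)$ with
\[
\mG(\vy^+)+\vn^+ = \bigl(\mG(\vy^+)-\bar\mG^p_{\vy}(\vy^+)\bigr) - \tfrac{M}{p!}\|\vy^+-\vy\|^{p-1}(\vy^+-\vy),
\]
whose norm is at most $\tfrac{M+L_p}{p!}\|\vy^+-\vy\|^p$. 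Note that the derivatives of order $\ge 2$ of $\mG$ coincide with those of $\mF$, so $\mG$ is still $p$th-order $L_p$-smooth; for $p=2$ the identity term is handled automatically.

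Next I pair this with the optimality condition $-\mG(\vy^*)\in\gN_\gX(\vy^*)$. Monotonicity of the normal cone together with strong monotonicity yields
\[
\mu\|\vy^+-\vy^*\|^2\le\langle \mG(\vy^+)+\vn^+,\vy^+-\vy^*\rangle .
\]
Hence $\mu\|\vy^+-\vy^*\|\le \tfrac{M+L_p}{p!}\|\vy^+-\vy\|^p \le \tfrac{M+L_p}{p!}(\|\vy^+-\vy^*\|+r)^p$.

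The crux is to prevent $\|\vy^+-\vy^*\|$ from being large, since the bound above is useless if the step overshoots. I would first prove an a priori bound $\|\vy^+-\vy\|\le c\,r$. The route is to test the tensor-step inclusion against $\vy^*-\vy^+$, using monotonicity of the Taylor model plus regularization; the regularized model is monotone for $M\ge m_pL_p$ large, a standard fact from \citet[Theorem 3.5]{lin2022perseus}. From this one gets
\[
\tfrac{M}{p!}\|\vy^+-\vy\|^{p+1}\lesssim \|\bar\mG^p_\vy(\vy^*)-\mG(\vy^*)\|\,\|\vy^+-\vy^*\|+\dots,
\]
which bounds $\|\vy^+-\vy\|\le C_p r$ with $C_p$ depending only on $p$ and $m_p$. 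This is the step I expect to be the main obstacle: the bookkeeping of the model's monotonicity needs $M$ to be a sufficiently large multiple of $L_p$.

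With the a priori bound in hand, I get $\mu\|\vy^+-\vy^*\|\le \tfrac{(m_p+1)L_p}{p!}(C_p r)^p$. This gives
\[
\|\vy^+-\vy^*\|\le \frac{(m_p+1)C_p^p}{p!}\,\frac{L_p}{\mu}\,r^{p-1}\cdot r .
\]
Since $r\le a_p(\mu/L_p)^{1/(p-1)}$, we have $\tfrac{L_p}{\mu}r^{p-1}\le a_p^{p-1}$. Choosing $a_p$ small enough that $\theta_p:=(m_p+1)C_p^pa_p^{p-1}/p!<1$ (for example $\theta_p=1/2$) yields the contraction \eqref{eq:atm-local}.
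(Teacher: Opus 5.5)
The paper does not prove this lemma. It imports it from \citet[Lemma 5.1]{chen2026halpern}, remarking only that it essentially follows from \citet[Theorem 3.5]{lin2022perseus}. Your argument is a correct self-contained replacement.

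The one step needing care is the a priori bound. If you use monotonicity of the full model $\Phi(\bm{z})=\bar\mG^p_{\vy}(\bm{z})+\frac{M}{p!}\|\bm{z}-\vy\|^{p-1}(\bm{z}-\vy)$, the coercive term $\|\vy^+-\vy\|^{p+1}$ cancels and you get nothing. You must put only part of the regularization into the monotone piece:
\begin{itemize}
\item Take $M=2pL_p$ and split $\Phi=\Phi_1+\frac{pL_p}{p!}\|\cdot-\vy\|^{p-1}(\cdot-\vy)$, where $\Phi_1$ is monotone.
\item Pair the two inclusions, and use $\langle\vy^+-\vy,\vy^+-\vy^*\rangle\ge s^2-sr$ with $s=\|\vy^+-\vy\|$.
\item This gives $p(s^{p+1}-s^pr)\le(p+1)r^p(s+r)$, which forces $s\le 3r$. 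So your $C_p$ can be taken to be $3$.
\end{itemize}

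There is also a shorter route. The same Jacobian comparison that makes the model monotone gives more. For $M\ge pL_p$, $\bm{z}$ on a segment in $\gX$, and $\bm{h}$ along that segment,
\[
\langle D\Phi(\bm{z})\bm{h},\bm{h}\rangle\ge\langle D\mG(\bm{z})\bm{h},\bm{h}\rangle+\Bigl(\frac{M}{p!}-\frac{L_p}{(p-1)!}\Bigr)\|\bm{z}-\vy\|^{p-1}\|\bm{h}\|^2\ge\mu\|\bm{h}\|^2 .
\]
So $\Phi$ is itself $\mu$-strongly monotone on $\gX$. Pairing $-\Phi(\vy^+)\in\gN_\gX(\vy^+)$ with $-\mG(\vy^*)\in\gN_\gX(\vy^*)$ gives $\mu\|\vy^+-\vy^*\|^2\le\langle\mG(\vy^*)-\Phi(\vy^*),\vy^+-\vy^*\rangle$. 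The Taylor bound at $\vy^*$ then yields $\|\vy^+-\vy^*\|\le\frac{M+L_p}{\mu\,p!}r^p$ in one line, with no overshoot control needed.

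Your route instead measures progress through the residual certificate of Lemma~\ref{lem:tensor-residual-certificate} and strong monotonicity of $\mG$ at $\vy^+$. That matches how Lemma~\ref{lem:atm-absolute} later certifies accuracy. The price is the extra a priori step and a larger $m_p$.
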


Following \citet{chen2026halpern}, the ATM subroutine in Algorithm \ref{alg:anchoring} consists of two phases. In the first phase with $K$ iterations, the algorithm iteratively performs the tensor step on the anchored/regularized operator \(\mG_k(\vy) := \mG(\vy) + (\mu_k-\mu)(\vy - \vy_0) \). The anchoring coefficient $\mu_k$ is chosen such that every tensor step lies in the local superlinear convergence region and $\mu_k$ decreases over iterations as the algorithm approaches the minimizer. Formally, the parameters in phase one are set as follows:
\begin{equation} \label{eq:rj-muj}
    r_k := \max \left\{ \rho(\mu), \left( \frac{1}{R} + \frac{k (1 - \theta_p)}{2R (p-1)} \right)^{-1}  \right\}, \qquad
 \mu_k := \rho^{-1}(r_k) =  \mu \left( \frac{r_k}{\rho(\mu)} \right)^{p-1},
\end{equation}
where $R>0$ satisfies $\| \vy_0 - \vy^*  \| \le R$. By this definition, $r_k$ is decreasing from $r_0$ that satisfies $r_0 \le R$  to $\rho(\mu)$. Consequently, $\mu_k$ is decreasing from $\mu_0$ such that $\mu_0 \ge \rho^{-1}(R)$ to $\mu$. It can also be shown that the minimal number of iterations $K$ such that $r_k =\rho$ satisfies $K = \gO\left( R (L_p/\mu)^{1/(p-1)} \right)$.


\begin{lem}[{\citet[Theorem 5.1]{chen2026halpern}}] \label{lem:K-enter-local}
For any integer $p \ge 2$, let
$\mG$ be $\mu$-strongly monotone and $p$th-order $L_p$-smooth, with $\vzero\in\mG(\vy^*)+\gN_\gX(\vy^*)$. Let $K=\min\{k\ge0:r_k=\rho(\mu)\}$. If $R\ge\max\{\rho(\mu),\|\vy_0-\vy^*\|\}$, then Phase I of Algorithm \ref{alg:anchoring} returns $\|\vy_{K}-\vy^*\|\le\rho(\mu)$ in $K = \gO( R/\rho(\mu) )$ iterations.
\end{lem}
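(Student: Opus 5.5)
The plan is to track the solutions of the anchored subproblems along Phase I and prove, by induction, that every iterate stays inside the local linear-convergence region of the current anchored operator.

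\textbf{Setup and invariant.} For $\lambda\ge\mu$, let $\vy^*(\lambda)$ denote the solution of $\vzero\in\mG(\vy)+(\lambda-\mu)(\vy-\vy_0)+\gN_\gX(\vy)$. Note that $\vy^*(\mu)=\vy^*$. Write $\vy_k^*:=\vy^*(\mu_k)$. The operator $\mG_k$ is $\mu_k$-strongly monotone. It is also $p$th-order $L_p$-smooth, since the added term is affine and so does not change $D^{p-1}\mG$ (for $p=2$ it shifts $D\mG$ by a constant). By the choice $\mu_k=\rho^{-1}(r_k)$, its local radius from Lemma \ref{lem:local-region} is exactly $\rho(\mu_k)=r_k$. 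The invariant I will prove by induction is
\[
\|\vy_k-\vy_k^*\|\le r_k .
\]
Once this holds at $k=K$, we have $\mu_K=\mu$, hence $\vy_K^*=\vy^*$ and $\|\vy_K-\vy^*\|\le r_K=\rho(\mu)$, which is the claim.

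\textbf{Two geometric facts about the anchored path.} Put $\mathcal A=\mG-\mu{\rm Id}+\gN_\gX$, which is monotone.
\begin{enumerate}[label=(\roman*)]
\item The anchor $\vy_0$ is pulled toward the solution: $\|\vy^*(\lambda)-\vy^*\|\le\|\vy_0-\vy^*\|\le R$. This follows from the monotonicity of $\mathcal A$ tested at the pair $\vy^*(\lambda),\vy^*$. Consequently $\|\vy^*(\lambda)-\vy_0\|\le 2R$.
\item The path is Lipschitz in the parameter. Let $\vz=\vy^*(\lambda')$ and $\vw=\vy^*(\lambda)$. Monotonicity of $\mathcal A$ gives $\langle\lambda'(\vz-\vy_0)-\lambda(\vw-\vy_0),\vw-\vz\rangle\ge0$. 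Rewriting $\lambda'(\vz-\vy_0)-\lambda(\vw-\vy_0)=\lambda(\vz-\vw)+(\lambda'-\lambda)(\vz-\vy_0)$ yields
\[
\|\vy^*(\lambda)-\vy^*(\lambda')\|\le\frac{|\lambda-\lambda'|}{\lambda}\,\|\vy^*(\lambda')-\vy_0\|\le\frac{2R\,|\lambda-\lambda'|}{\lambda}.
\]
\end{enumerate}

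\textbf{Induction.} For the base case, $r_0=R$ (assuming $R\ge\rho$, as in the hypothesis) and $\|\vy_0-\vy_0^*\|\le\|\vy_0-\vy^*\|\le R$ by fact (i).

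For the step, assume the invariant at $k$. Lemma \ref{lem:local-region} gives $\|\vy_{k+1}-\vy_k^*\|\le\theta_p r_k$. It then suffices to show
\[
\|\vy_k^*-\vy_{k+1}^*\|\le r_{k+1}-\theta_p r_k .
\]
By fact (ii), the left side is at most $2R(\mu_k-\mu_{k+1})/\mu_{k+1}=2R\big((r_k/r_{k+1})^{p-1}-1\big)$.

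This is the main obstacle: a calculation using the harmonic schedule. From $1/r_{k+1}-1/r_k\le(1-\theta_p)/(2R(p-1))$ we get
\[
\frac{r_k}{r_{k+1}}\le 1+\frac{(1-\theta_p)\,r_k}{2R(p-1)}.
\]
Then I would bound $(1+x)^{p-1}-1$ by roughly $(p-1)x$. This gives the required drift bound of order $(1-\theta_p)r_k$, and compares it with $r_{k+1}-\theta_p r_k\approx(1-\theta_p)r_k$. The constants are tight, so I expect to need care here. I would use that $x\le(1-\theta_p)/(2(p-1))$ is small, and if necessary absorb the slack by shrinking $a_p$ or adjusting the factor $2$ in the schedule \eqref{eq:rj-muj}. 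When $r_{k+1}$ hits the floor $\rho(\mu)$, the increment is only smaller, so the same bound applies.

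\textbf{Iteration count.} The index $K$ is the first $k$ with $1/R+k(1-\theta_p)/(2R(p-1))\ge 1/\rho(\mu)$. Hence $K\le 2(p-1)R/((1-\theta_p)\rho(\mu))+1=\gO(R/\rho(\mu))$, where the additive $1$ is absorbed using $R\ge\rho(\mu)$.
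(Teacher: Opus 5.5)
\textbf{The induction step does not close as written, and the problem is more than tight constants.} Your overall plan is sound: track the anchored solutions $\vy^*(\mu_k)$, use $\rho(\mu_k)=r_k$, contract with Lemma~\ref{lem:local-region}, and pay for the drift of the path. The trouble is that your drift bound uses $\|\vy_k^*-\vy_0\|\le 2R$.

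In an unfloored step set $x=r_k(1-\theta_p)/(2R(p-1))$, so that $r_k/r_{k+1}=1+x$. Your bound on $\|\vy_k^*-\vy_{k+1}^*\|$ then satisfies
\[
2R\bigl((1+x)^{p-1}-1\bigr)\ \ge\ 2R(p-1)x=(1-\theta_p)\,r_k,
\]
while the target is
\[
r_{k+1}-\theta_p r_k=r_k\Bigl(\frac{1}{1+x}-\theta_p\Bigr)<(1-\theta_p)\,r_k .
\]
So the required inequality is false at every step, for every $\theta_p\in(0,1)$, already at first order in $x$. Neither side involves $a_p$, so shrinking $a_p$ cannot repair it. Enlarging the factor $2$ in \eqref{eq:rj-muj} would repair it, but then you are analyzing a different algorithm from the one in the statement.

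\textbf{The missing ingredient is a sharper location of the anchored path:} $\|\vy^*(\lambda)-\vy_0\|\le\|\vy^*-\vy_0\|\le R$. The same monotonicity test you used for (i) gives
\[
\lambda\|\vy^*(\lambda)-\vy^*\|^2\le(\lambda-\mu)\langle\vy_0-\vy^*,\vy^*(\lambda)-\vy^*\rangle .
\]
Since $\lambda-\mu\le\lambda$, this yields $\langle\vy^*(\lambda)-\vy_0,\vy^*(\lambda)-\vy^*\rangle\le0$. Hence $\vy^*(\lambda)$ lies in the ball with diameter $[\vy_0,\vy^*]$, and the claim follows.

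Your base case already relies on exactly this inequality, namely $\|\vy_0-\vy_0^*\|\le\|\vy_0-\vy^*\|$. It does not follow from (i) as you stated it: that route only gives $2R$, while $r_0=R$. So the base case is also unsupported as written.

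With $R$ in place of $2R$, put $\alpha=(1-\theta_p)/(2(p-1))$ and $s=r_k/R\in(0,1]$. The step then becomes $(1+\alpha s)^{p-1}-1\le s\bigl(\frac{1}{1+\alpha s}-\theta_p\bigr)$. The right side minus the left side is concave in $s$ and vanishes at $s=0$. It therefore suffices to check $s=1$, that is, $(1+\alpha)^{p-1}-(1+\alpha)^{-1}\le 1-\theta_p$. By the mean value theorem and $\alpha\le 1/(2(p-1))$, the left side is at most
\[
\alpha\bigl((p-1)(1+\alpha)^{p-2}+1\bigr)\le\frac{1-\theta_p}{2}\Bigl(e^{(p-2)/(2(p-1))}+\frac{1}{p-1}\Bigr)\le 1-\theta_p .
\]
So the factor $2$ in the schedule is calibrated to the radius $R$, not $2R$. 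With this fix, your treatment of the floored step and your count $K=\gO(R/\rho(\mu))$ go through unchanged.
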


Therefore, at the beginning of the second phase, the anchoring coefficient $\mu_K$ has decreased to $\mu_K = \mu$ and the algorithm has entered the local region for the unregularized operator $\mG$ in Lemma \ref{lem:local-region}. Therefore, the second phase reaches a $\delta$-solution such that $\|\vy_{K+S} - \vy^* \| \le \delta$ in an additional $S = \gO( \log (\rho(\mu) / \delta))$ iterations.

\subsection{The Adaptive Anchored Tensor Method without Initial Distance Knowledge}


The ATM subroutine introduced above can solve the resolvent operator up to high accuracy. However, this algorithm requires knowledge of the initial distance $R>0$ such that $\| \vy_0 - \vy^* \| \le R$ to set the parameters $r_k$ and $\mu_k$ in \eqref{eq:rj-muj}. However, when calling the subroutine in Algorithm \ref{alg:inexact-halpern-anchored}, this information is unavailable in advance. To address this issue, we propose the \textsf{$R$-Adaptive-ATM} subroutine in Algorithm \ref{alg:atm-absolute}, which adapts to the unknown distance by dynamically doubling the guess $R_j$ at each attempt.

\begin{algorithm}[htbp]
\caption{\textsf{$R$-Adaptive-ATM} $(\mG,\vy,\mu,M, \delta)$}\label{alg:atm-absolute}
\begin{algorithmic}[1]
\State Set $S = \gO_p(1+\log_+(\rho(\mu)/\delta))$ using \eqref{eq:atm-refinement-count}.
\For{$j=0,1,2,\ldots$}
\State Set $R_j=2^jS\rho(\mu)$.
\State $\vw=\textsf{Anchored-Tensor-Method}(\mG,\vy,\mu,M,R_j,S)$ 
\State Let $\vu=\gT_{\mG}^p(\vw;M)$ and $\vn = - \bar \mG_{\vw}^p(\vu) - M  \| \vu - \vw \|^{p-1} (\vu - \vw) /p! $ by \eqref{eq:atm-normal}.
\If{$\mu^{-1}\|\mG(\vu)+\vn\|\le\delta$}
\State \Return $(\vu,\vn)$
\EndIf
\EndFor
\end{algorithmic}
\end{algorithm}

In this algorithm, we set
\begin{equation}\label{eq:atm-refinement-count}
h_p:= \frac{2^p(m_p+1)a_p^{p-1}}{p!}, \quad
S=\max\left\{1,\left\lceil\frac{\log_+(h_p\rho(\mu)/\delta)}{p\log(1/\theta_p)}\right\rceil\right\},~~~\text{and}~~~\log_+(s):=\max\{0,\log s\},
\end{equation}
where $\delta>0$ is the target precision. The algorithm terminates when the tangent residual in Condition \ref{cond:subroutine} is sufficiently small, as tested using a final tensor step in light of Lemma \ref{lem:tensor-residual-certificate}. The guarantee of this algorithm is stated in the following lemma.

\begin{lem}\label{lem:atm-absolute}
For any integer $p \ge 2$, let $\mG$ be $\mu$-strongly monotone and $p$th-order $L_p$-smooth, with $\vzero\in\mG(\vy^*)+\gN_\gX(\vy^*)$. For $\vy\in\gX$ and $\delta>0$, Algorithm \ref{alg:atm-absolute} with $M=m_pL_p$ and $S$ given by \eqref{eq:atm-refinement-count} returns $\vu\in\gX$ and $\vn\in\gN_\gX(\vu)$ satisfying
\[
\mu^{-1}\|\mG(\vu)+\vn\|\le\delta
\]
in $\gO(1+r/\rho(\mu)+\log_+(\rho(\mu)/\delta))$ oracle calls, where $r=\|\vy-\vy^*\|$.
\end{lem}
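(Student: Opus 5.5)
We prove Lemma~\ref{lem:atm-absolute} in three parts: correctness of the returned pair, a sufficient condition for the stopping test to pass, and a count of the doubling attempts. Correctness needs no argument about the attempt index. Whenever the algorithm returns, $\vn\in\gN_\gX(\vu)$ holds by Lemma~\ref{lem:tensor-residual-certificate} applied to $\mG$, with $\vn$ defined as in \eqref{eq:atm-normal}. The stopping test is exactly $\mu^{-1}\|\mG(\vu)+\vn\|\le\delta$, so the returned pair always satisfies the required bound. It therefore suffices to show that the loop terminates within the stated oracle budget.

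\textbf{Step 1: a sufficient condition for the test to pass.} Suppose attempt $j$ satisfies $R_j\ge\max\{\rho(\mu),r\}$, where $r=\|\vy-\vy^*\|$. Since $S\ge1$, we have $R_j\ge\rho(\mu)$ for every $j$, so this reduces to $R_j\ge r$. Then Lemma~\ref{lem:K-enter-local} shows that Phase~I ends with $\|\vy_K-\vy^*\|\le\rho(\mu)$. Lemma~\ref{lem:local-region} applied over the $S$ steps of Phase~II gives
\[
\|\vw-\vy^*\|\le\theta_p^S\rho(\mu).
\]
The final tensor step contracts once more, so $\|\vu-\vy^*\|\le\theta_p\|\vw-\vy^*\|$. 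By the triangle inequality, $\|\vu-\vw\|\le2\|\vw-\vy^*\|$. Lemma~\ref{lem:tensor-residual-certificate}, with $\mG$ being $L_p$-smooth and $M=m_pL_p$, then yields
\[
\|\mG(\vu)+\vn\|\le\frac{(m_p+1)L_p}{p!}2^p\theta_p^{pS}\rho(\mu)^p.
\]
Next, use $\rho(\mu)^{p-1}=a_p^{p-1}\mu/L_p$ to cancel $L_p$ against $\mu$. This gives
\[
\mu^{-1}\|\mG(\vu)+\vn\|\le h_p\,\theta_p^{pS}\rho(\mu).
\]
By the choice of $S$ in \eqref{eq:atm-refinement-count}, $\theta_p^{pS}\le\delta/(h_p\rho(\mu))$ whenever $h_p\rho(\mu)>\delta$. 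When $h_p\rho(\mu)\le\delta$, the bound holds trivially since $\theta_p<1$. In either case the test passes.

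\textbf{Step 2: counting attempts.} Let $J$ be the first index with $2^JS\rho(\mu)\ge r$. The loop stops at or before attempt $J$, possibly earlier. Attempt $j$ costs $K_j+S+1$ oracle calls. By the count stated after \eqref{eq:rj-muj}, which holds deterministically from the formula for $r_k$ regardless of whether $R_j$ is valid, $K_j=\gO(R_j/\rho(\mu))=\gO(2^jS)$. Summing the geometric series over $j\le J$ gives
\[
\gO\bigl(2^JS+(J+1)S\bigr)=\gO(2^JS).
\]
If $J=0$, this is $\gO(S)$. If $J\ge1$, minimality gives $2^{J-1}S\rho(\mu)<r$, so $2^JS=\gO(r/\rho(\mu))$. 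Combined with $S=\gO_p(1+\log_+(\rho(\mu)/\delta))$, the total is $\gO(1+r/\rho(\mu)+\log_+(\rho(\mu)/\delta))$.

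\textbf{Main obstacle.} The delicate point is the per-attempt cost when the guess is invalid ($R_j<r$). Lemma~\ref{lem:K-enter-local} does not apply there, so we must confirm that $K_j$ depends only on $R_j$ and $\rho(\mu)$ through \eqref{eq:rj-muj}. It does: $r_k$ reaches $\rho(\mu)$ once $k\gtrsim 2(p-1)R_j/((1-\theta_p)\rho(\mu))$. Even a failed attempt therefore terminates within its budget, and the geometric doubling absorbs all failed attempts.
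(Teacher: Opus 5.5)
Your proposal is correct and follows essentially the same route as the paper. Both show that once $R_j\ge r$, the certificate gives $\mu^{-1}\|\mG(\vu)+\vn\|\le h_p\theta_p^{pS}\rho(\mu)\le\delta$, and then sum the doubling costs $\gO(2^jS)$ up to the first index with $2^jS\rho(\mu)\ge r$. Your extra checks make explicit points the paper leaves implicit: that $R_j\ge\rho(\mu)$ because $S\ge1$, that $K_j$ is fixed by the formula for $r_k$ in \eqref{eq:rj-muj} even when the guess $R_j$ is too small, and the case $h_p\rho(\mu)\le\delta$.
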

\begin{proof}
Let $r=\|\vy-\vy^*\|$ and consider a trial with $R_j\ge r$. Let $K_j$ be the number of first-phase steps in the call to Algorithm \ref{alg:anchoring}, whose iterates start at $\vy_0=\vy$. Lemma \ref{lem:K-enter-local} gives $\|\vy_{K_j}-\vy^*\|\le\rho(\mu)$. After the $S$ second-phase steps, the returned point $\vw=\vy_{K_j+S}$ satisfies, by \eqref{eq:atm-local},
\[
\|\vw-\vy^*\|\le\theta_p^S\rho(\mu).
\]
Algorithm \ref{alg:atm-absolute} then takes the final step $\vu=\gT_{\mG}^p(\vw;M)$ and forms $\vn\in\gN_\gX(\vu)$ by \eqref{eq:atm-normal}. Applying \eqref{eq:atm-local} once more gives
\[
\|\vu-\vw\|\le\|\vu-\vy^*\|+\|\vw-\vy^*\|
\le(1+\theta_p)\theta_p^S\rho(\mu)
\le2\theta_p^S\rho(\mu).
\]
Using \eqref{eq:atm-certificate}, $M=m_pL_p$, and $\rho(\mu)^{p-1}=a_p^{p-1}\mu/L_p$, we obtain
\[
\mu^{-1}\|\mG(\vu)+\vn\|
\le\frac{(m_p+1)L_p}{\mu p!}\bigl(2\theta_p^S\rho(\mu)\bigr)^p
=h_p\theta_p^{pS}\rho(\mu)\le\delta,
\]
where the last inequality follows from \eqref{eq:atm-refinement-count}. Note that acceptance occurs no later than $j^*:=\min\{j\ge0:2^jS\rho(\mu)\ge r\}$. Each trial starts again from $\vy$ and costs $\gO(R_j/\rho(\mu)+S)=\gO(2^jS)$ oracle calls. Consequently, the total complexity is
\[
\sum_{j=0}^{j^*}\gO(2^jS)
=\gO\bigl(S+r/\rho(\mu)\bigr)
=\gO\bigl(1+r/\rho(\mu)+\log_+(\rho(\mu)/\delta)\bigr).
\]
\end{proof}


Now, we are ready to give the formal proof of our main theorem.
\begin{thm}[Full version of Theorem \ref{thm:main}]\label{thm:total-atm}
Under the assumptions and parameter choices of Theorem \ref{thm:outer-loop}, Algorithm \ref{alg:inexact-halpern-anchored} uses $\gO(T)$ total $p$th-order oracle calls and returns
\[
\|\vv_{T-1}\|\le\frac{c_pL_pD^p}{T^{(3p-1)/2}}, \qquad \vv_{T-1}\in(\mF+\gN_\gX)(\vu_{T-1}).
\]
Consequently, the $p$th-order oracle complexity for finding an $\eps$-solution is 
\[
\gO\left(\left(\frac{L_pD^p}{\epsilon}\right)^{2/(3p-1)}\right).
\]
\end{thm}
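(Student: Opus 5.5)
The argument splits into two parts: the residual bound comes from the outer-loop analysis, and the $\gO(T)$ oracle count comes from summing the subroutine costs with the distance estimate of Lemma \ref{lem:predictor-energy}.

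\textbf{Residual bound.} Lemma \ref{lem:atm-absolute} applies to each call of \textsf{$R$-Adaptive-ATM}. The operator $\mG_t$ is $\eta_t^{-1}$-strongly monotone and $p$th-order $L_p$-smooth when $p\ge2$, because the linear term does not affect $D^{p-1}$. Its solution is $\vy^*=\mP_{\eta_t}(\vx_t)$. With $\mu=\eta_t^{-1}$ and $\delta=\delta_T$, each call returns $(\vu_t,\vn_t)$ satisfying Condition \ref{cond:subroutine}. Theorem \ref{thm:outer-loop} then gives
\[
\|\vv_{T-1}\|\le \frac{c_pD}{\eta_0(T+1)^{(3p-1)/2}}.
\]
Substituting $\eta_0=1/(L_pD^{p-1})$ yields the claimed bound $c_pL_pD^p/T^{(3p-1)/2}$. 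Since $\vv_{T-1}=\mF(\vu_{T-1})+\vn_{T-1}$ with $\vn_{T-1}\in\gN_\gX(\vu_{T-1})$, we have $\vv_{T-1}\in(\mF+\gN_\gX)(\vu_{T-1})$. Setting the right-hand side equal to $\eps$ gives the stated oracle complexity, provided the total work is $\gO(T)$.

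\textbf{Per-call cost.} By Lemma \ref{lem:atm-absolute}, call $t$ costs
\[
\gO\bigl(1+r_t/\rho_t+\log_+(\rho_t/\delta_T)\bigr),\qquad \rho_t=\rho(\eta_t^{-1})=a_p(\eta_tL_p)^{-1/(p-1)}.
\]
Using $\eta_t\asymp_q\eta_0(t+q)^{q-1}$ and $q-1=3(p-1)/2$, we get
\[
\eta_tL_p\asymp D^{-(p-1)}(t+q)^{3(p-1)/2},\qquad\text{hence}\qquad \rho_t\asymp_p D\,(t+q)^{-3/2}.
\]
The paper's earlier sketch wrote $(t+1)^{3/2}$ in the numerator; the inverse is what the computation gives, and it is what makes $r_t/\rho_t\asymp (t+q)^{3/2}r_t/D$ consistent. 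The constant term contributes $\gO(T)$ in total.

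\textbf{Summing $r_t/\rho_t$.} For $t\ge1$, Cauchy--Schwarz and Lemma \ref{lem:predictor-energy} give
\[
\sum_{t=1}^{T-1}\frac{r_t}{\rho_t}\lesssim \frac1D\Bigl(\sum_{t=1}^{T-1}(t+q)^2r_t^2\Bigr)^{1/2}\Bigl(\sum_{t=1}^{T-1}(t+q)\Bigr)^{1/2}\lesssim \frac1D\cdot D\cdot T=\gO(T).
\]
The $t=0$ call starts at $\vy_0=\vx_0$. Its distance satisfies $r_0=\|\vx_0-\mP_{\eta_0}(\vx_0)\|\le D$ by \citep[Lemma 5.2]{chen2026halpern}. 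Since $\rho_0\asymp D$, this call costs $\gO(1)$.

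\textbf{Summing the logarithmic terms.} Because $\delta_T=D/T^{3/2}$, we have $\rho_t/\delta_T\asymp (T/(t+q))^{3/2}$, which is at most $\gO(T^{3/2})$. Bounding each term by $\gO(\log T)$ would only give $\gO(T\log T)$ and lose a logarithm. Instead, sum the exact form:
\[
\sum_{t<T}\log_+\!\Bigl(c\,(T/(t+q))^{3/2}\Bigr)\le \gO(T)+\tfrac32\sum_{t=1}^{T}\log(T/t)=\gO(T),
\]
since $\sum_{t\le T}\log(T/t)=T\log T-\log T!=\gO(T)$ by Stirling.

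\textbf{Main obstacle.} The delicate steps are matching the exponents so that $\rho_t\asymp D(t+q)^{-3/2}$ pairs exactly with the $(t+q)^2$ weight in Lemma \ref{lem:predictor-energy}, and summing the logarithmic refinement cost exactly rather than term by term, so that no $\log T$ factor survives.
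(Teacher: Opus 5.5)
Your proposal is correct and follows the paper's proof essentially step for step. You get the residual bound from Theorem \ref{thm:outer-loop} with $\eta_0=1/(L_pD^{p-1})$ and the per-call cost from Lemma \ref{lem:atm-absolute}. You sum $r_t/\rho_t$ via Lemma \ref{lem:predictor-energy} and Cauchy--Schwarz, and you sum the logarithmic terms exactly using $T\log T-\log(T!)=\gO(T)$. The only differences are minor: you bound $r_0\le D$ via \citet[Lemma 5.2]{chen2026halpern} where the paper gets $r_0\le 2D$ from the triangle inequality; you check explicitly that $\mG_t$ satisfies the hypotheses of Lemma \ref{lem:atm-absolute}; and your corrected $\rho_t\asymp_p D(t+q)^{-3/2}$ matches what the paper's formal proof actually uses.
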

\begin{proof}
The local radius for the $t$th subproblem according to \eqref{eq:atm-local} satisfies
\[
\rho_t=a_p(\eta_tL_p)^{-1/(p-1)}\asymp_p\frac{D}{(t+1)^{3/2}},
\]
since $\eta_t\asymp_p\eta_0(t+1)^{q-1}$ and $(q-1)/(p-1)=3/2$. Lemma \ref{lem:atm-absolute} bounds the cost of solving each subproblem by  
\begin{align} \label{eq:one-two}
\gO(1+ \underbrace{r_t/\rho_t}_{\rm (I)}+ \underbrace{\log_+(\rho_t/\delta_T)}_{\rm (II)}).    
\end{align}
The total cost is then the summation of \eqref{eq:one-two} over $t = 0,\cdots, T-1$. 
By the non-expansiveness in Proposition \ref{prop:resolvent-geom}, the triangle inequality, and $\mP_{\eta_0}(\vx^*)=\vx^*$, we have
\[
r_0=\|\vx_0-\mP_{\eta_0}(\vx_0)\|
\le\|\vx_0-\vx^*\|+\|\mP_{\eta_0}(\vx^*)-\mP_{\eta_0}(\vx_0)\|
\le2D,
\]
hence $r_0/\rho_0=\gO(1)$ for any constant $p$. For the remaining costs contributed by ${\rm (I)}$, Lemma \ref{lem:predictor-energy} and Cauchy--Schwarz indicate that, for any constant $p$, we have
\begin{align} \label{eq:rt-rhot}
\sum_{t=1}^{T-1}\frac{r_t}{\rho_t} = \gO \left(\frac{1}{D}\sum_{t=1}^{T-1}(t+q)^{3/2}r_t \right)= \gO \left(\frac{1}{D}\left(\sum_{t=1}^{T-1}(t+q)^2r_t^2\right)^{1/2}
\left(\sum_{t=1}^{T-1}(t+q)\right)^{1/2} \right) = \gO(T).
\end{align}
For the total costs contributed by ${\rm (II)}$, the setting of $\delta_T=D/T^{3/2}$ implies
\begin{align*}
\sum_{t=0}^{T-1}\log_+\frac{\rho_t}{\delta_T} \le c_p''T+\frac32\sum_{t=0}^{T-1}\log\frac{T}{t+1} =c_p''T+\frac32\bigl(T\log T-\log(T!)\bigr)=\gO(T),
\end{align*}
where the final inequality uses $\log(T!)\ge T\log T-T$ and $c_p''>0$ is a numerical constant that only depends on $p$. Therefore, the claim follows from Theorem \ref{thm:outer-loop}.
\end{proof}


\section{Conclusion and Future Work}

In this paper, we propose a novel inexact accelerated Halpern iteration that solves smooth monotone variational inequalities with a $p$th-order oracle complexity of $\gO(\eps^{-2/(3p-1)})$. Our result is indeed optimal as it matches the lower bound given in \citet{chen2026solving}. In the future, it is interesting to study the optimal complexity under more general settings, including stochastic problems \citep{hsieh2019convergence,chen2024near}, finite-sum problems \citep{alacaoglu2022stochastic,cai2024variance}, and inexact second-order methods \citep{agafonov2024exploring,chen2025second}.




\bibliographystyle{plainnat}
\bibliography{sample}

@article{lin2023monotone,
  title={Monotone inclusions, acceleration, and closed-loop control},
  author={Lin, Tianyi and Jordan, Michael I},
  journal={Mathematics of Operations Research},
  volume={48},
  number={4},
  pages={2353--2382},
  year={2023},
  publisher={INFORMS}
}

@inproceedings{hsieh2019convergence,
  title={On the convergence of single-call stochastic extra-gradient methods},
  author={Hsieh, Yu-Guan and Iutzeler, Franck and Malick, J{\'e}r{\^o}me and Mertikopoulos, Panayotis},
  booktitle={NeurIPS},
  year={2019}
}

@inproceedings{alacaoglu2022stochastic,
  title={Stochastic variance reduction for variational inequality methods},
  author={Alacaoglu, Ahmet and Malitsky, Yura},
  booktitle={COLT},
  year={2022}
}

@article{agafonov2024exploring,
  title={Exploring Jacobian inexactness in second-order methods for variational inequalities: Lower bounds, optimal algorithms and quasi-Newton approximations},
  author={Agafonov, Artem and Ostroukhov, Petr and Mozhaev, Roman and Yakovlev, Konstantin and Gorbunov, Eduard and Tak{\'a}{\v{c}}, Martin and Gasnikov, Alexander and Kamzolov, Dmitry},
  booktitle={NeurIPS},
  year={2024}
}

@article{zhang2026matching,
  title={Matching Higher-Order Oracle Complexity for Smooth Monotone Variational Inequalities},
  author={Zhang, Haihan and Wu, Wendao and Zhang, Chenheng and Fang, Cong and Li, Haoxuan and Lin, Zhouchen},
  journal={arXiv preprint arXiv:2609.13462},
  year={2026}
}

@article{chen2026halpern,
  title={Halpern Iteration Achieves $\mathcal{O}(\epsilon^{-1/p})$ $p$th-Order Oracle Complexity for Monotone Variational Inequalities},
  author={Chen, Lesi and Zhang, Xinliang and Wang, Hengyu and Liu, Chengchang and Chen, Yongchao and Zhang, Jingzhao},
  journal={arXiv preprint arXiv:2608.08463},
  year={2026}
}

@article{combettes2018monotone,
  title={Monotone operator theory in convex optimization},
  author={Combettes, Patrick L.},
  journal={Mathematical Programming},
  volume={170},
  number={1},
  pages={177--206},
  year={2018},
  publisher={Springer}
}

@article{ryu2016primer,
  title={Primer on monotone operator methods},
  author={Ryu, Ernest K. and Boyd, Stephen},
  journal={Appl. comput. math},
  volume={15},
  number={1},
  pages={3--43},
  year={2016}
}

@inproceedings{cai2024variance,
  title={Variance reduced halpern iteration for finite-sum monotone inclusions},
  author={Cai, Xufeng and Alacaoglu, Ahmet and Diakonikolas, Jelena},
  booktitle={ICLR},
  year={2024}
}

@article{halpern1967fixed,
  title={Fixed points of nonexpanding maps},
  author={Halpern, Benjamin},
  year={1967}
}

@article{lieder2020convergence,
  title={On the convergence rate of the Halpern-iteration},
  author={Lieder, Felix},
  journal={Optimization letters},
  volume={15},
  number={2},
  pages={405--418},
  year={2020},
  publisher={Berlin, Heidelberg: Springer}
}

@inproceedings{diakonikolas2020halpern,
  title={Halpern iteration for near-optimal and parameter-free monotone inclusion and strong solutions to variational inequalities},
  author={Diakonikolas, Jelena},
  booktitle={COLT},
  year={2020}
}

@inproceedings{cai2024accelerated,
  title={Accelerated Algorithms for Constrained Nonconvex-Noncancave Min-Max Optimization and Comonotone Inclusion},
  author={Yang Cai and Argyris Oikonomou and Weiqiang Zheng},
  year={2024},
  booktitle={ICML},
}

@inproceedings{cai2023accelerated,
  title={Accelerated Single-Call Methods for Constrained Min-Max Optimization},
  author={Yang Cai and Weiqiang Zheng},
  year={2023},
  booktitle={ICLR}
}

@article{goodfellow2020generative,
  title={Generative adversarial networks},
  author={Goodfellow, Ian and Pouget-Abadie, Jean and Mirza, Mehdi and Xu, Bing and Warde-Farley, David and Ozair, Sherjil and Courville, Aaron and Bengio, Yoshua},
  journal={Communications of the ACM},
  volume={63},
  number={11},
  pages={139--144},
  year={2020},
  publisher={ACM New York, NY, USA}
}

@book{cesa2006prediction,
  title={Prediction, learning, and games},
  author={Cesa-Bianchi, Nicolo and Lugosi, G{\'a}bor},
  year={2006},
  publisher={Cambridge university press}
}

@article{ba2025doubly,
  title={Doubly optimal no-regret online learning in strongly monotone games with bandit feedback},
  author={Ba, Wenjia and Lin, Tianyi and Zhang, Jiawei and Zhou, Zhengyuan},
  journal={Operations Research},
  volume={73},
  number={6},
  pages={3219--3244},
  year={2025},
  publisher={INFORMS}
}

@book{giannessi1995variational,
  title={Variational inequalities and network equilibrium problems},
  author={Giannessi, Franco and Maugeri, Antonino and others},
  year={1995},
  publisher={Springer}
}

@book{facchinei2003finite,
  title={Finite-dimensional variational inequalities and complementarity problems},
  author={Facchinei, Francisco and Pang, Jong-Shi},
  year={2003},
  publisher={Springer}
}

@article{jiang2021optimal,
  title={An optimal high-order tensor method for convex optimization},
  author={Jiang, Bo and Wang, Haoyue and Zhang, Shuzhong},
  journal={Mathematics of Operations Research},
  volume={46},
  number={4},
  pages={1390--1412},
  year={2021},
  publisher={INFORMS}
}

@article{ralph2000superlinear,
  title={Superlinear convergence of an interior-point method despite dependent constraints},
  author={Ralph, Daniel and Wright, Stephen J.},
  journal={Mathematics of Operations Research},
  volume={25},
  number={2},
  pages={179--194},
  year={2000},
  publisher={INFORMS}
}

@article{qi2002smoothing,
  title={Smoothing functions and smoothing Newton method for complementarity and variational inequality problems},
  author={Qi, Liqun and Sun, Defeng},
  journal={Journal of Optimization Theory and Applications},
  volume={113},
  pages={121--147},
  year={2002},
  publisher={Springer}
}

@inproceedings{garg2021near,
  title={Near-optimal lower bounds for convex optimization for all orders of smoothness},
  author={Garg, Ankit and Kothari, Robin and Netrapalli, Praneeth and Sherif, Suhail},
  booktitle={NeurIPS},
  year={2021}
}

@inproceedings{agarwal2018lower,
  title={Lower bounds for higher-order convex optimization},
  author={Agarwal, Naman and Hazan, Elad},
  booktitle={COLT},
  year={2018}
}

@article{nesterov2018lectures,
  title={Lectures on convex optimization},
  author={Nesterov, Yurii},
  volume={137},
  year={2018},
  publisher={Springer}
}

@article{nemirovskij1983problem,
  title={Problem complexity and method efficiency in optimization},
  author={Nemirovskij, Arkadij Semenovi{\v{c}} and Yudin, David Borisovich},
  year={1983},
  publisher={Wiley-Interscience}
}

@article{jiang2022generalized,
  title={Generalized optimistic methods for convex-concave saddle point problems},
  author={Jiang, Ruichen and Mokhtari, Aryan},
  journal={SIAM Journal on Optimization},
  volume={35},
  number={3},
  pages={2066--2097},
  year={2025},
  publisher={SIAM}
}

@inproceedings{chen2025solving,
  title={Solving Convex-Concave Problems with $\mathcal{O}(\epsilon^{-4/7})$
 Second-Order Oracle Complexity},
  author={Chen, Lesi and Liu, Chengchang and Luo, Luo and Zhang, Jingzhao},
  booktitle={COLT},
  year={2025}
}

@inproceedings{chen2025second,
  title={Second-order min-max optimization with lazy hessians},
  author={Chen, Lesi and Liu, Chengchang and Zhang, Jingzhao},
  booktitle={ICLR},
  year={2025}
}

@article{chen2026solving,
  title={Solving Convex-Concave Problems with $\mathcal{O}(\epsilon^{-4/(3p+1)})$ $p$th-Order Oracle Complexity},
  author={Chen, Lesi and Zhang, Xinliang and Liu, Chengchang and Li, Junru and Luo, Luo and Zhang, Jingzhao},
  journal={arXiv preprint arXiv:2604.19462},
  year={2026}
}

@inproceedings{ying2016stochastic,
  title={Stochastic online AUC maximization},
  author={Ying, Yiming and Wen, Longyin and Lyu, Siwei},
  booktitle={NeurIPS},
  year={2016}
}

@inproceedings{zhang2018mitigating,
  title={Mitigating unwanted biases with adversarial learning},
  author={Zhang, Brian Hu and Lemoine, Blake and Mitchell, Margaret},
  booktitle={Proceedings of the 2018 AAAI/ACM Conference on AI, Ethics, and Society},
  pages={335--340},
  year={2018}
}

@inproceedings{carmon2022distributionally,
  title={Distributionally robust optimization via ball oracle acceleration},
  author={Carmon, Yair and Hausler, Danielle},
  booktitle={NeurIPS},
  year={2022}
}

@article{monteiro2013accelerated,
  title={An accelerated hybrid proximal extragradient method for convex optimization and its implications to second-order methods},
  author={Monteiro, Renato DC and Svaiter, Benar Fux},
  journal={SIAM Journal on Optimization},
  volume={23},
  number={2},
  pages={1092--1125},
  year={2013},
  publisher={SIAM}
}

@article{monteiro2012iteration,
  title={Iteration-complexity of a Newton proximal extragradient method for monotone variational inequalities and inclusion problems},
  author={Monteiro, Renato DC and Svaiter, Benar Fux},
  journal={SIAM Journal on Optimization},
  volume={22},
  number={3},
  pages={914--935},
  year={2012},
  publisher={SIAM}
}

@article{lin2022perseus,
  title={Perseus: A simple high-order regularization method for variational inequalities},
  author={Lin, Tianyi and Jordan, Michael I.},
  journal={Mathematical Programming},
  pages={1--42},
  year={2024},
  publisher={Springer}
}

@article{adil2022optimal,
  title={Optimal methods for higher-order smooth monotone variational inequalities},
  author={Adil, Deeksha and Bullins, Brian and Jambulapati, Arun and Sachdeva, Sushant},
  journal={arXiv preprint arXiv:2205.06167},
  year={2022}
}

@inproceedings{jiang2020improved,
  title={An improved cutting plane method for convex optimization, convex-concave games, and its applications},
  author={Jiang, Haotian and Lee, Yin Tat and Song, Zhao and Wong, Sam Chiu-wai},
  booktitle={SIGACT},
  year={2020}
}

@inproceedings{nesterov1983method,
  title={A method for solving the convex programming problem with convergence rate $\mathcal{O} (1/k^2)$},
  author={Nesterov, Yurii},
  booktitle={Dokl akad nauk Sssr},
  volume={269},
  pages={543},
  year={1983}
}

@article{korpelevich1976extragradient,
  title={The extragradient method for finding saddle points and other problems},
  author={Korpelevich, Galina M},
  journal={Matecon},
  volume={12},
  pages={747--756},
  year={1976}
}

@article{rockafellar1976monotone,
  title={Monotone operators and the proximal point algorithm},
  author={Rockafellar, R. Tyrrell},
  journal={SIAM Journal on Control and Optimization},
  volume={14},
  number={5},
  pages={877--898},
  year={1976},
  publisher={SIAM}
}

@article{nesterov2007dual,
  title={Dual extrapolation and its applications to solving variational inequalities and related problems},
  author={Nesterov, Yurii},
  journal={Mathematical Programming},
  volume={109},
  number={2-3},
  pages={319--344},
  year={2007},
  publisher={Springer}
}

@article{nemirovski2004prox,
  title={Prox-method with rate of convergence $\mathcal{O}(1/t)$ for variational inequalities with Lipschitz continuous monotone operators and smooth convex-concave saddle point problems},
  author={Nemirovski, Arkadi},
  journal={SIAM Journal on Optimization},
  volume={15},
  number={1},
  pages={229--251},
  year={2004},
  publisher={SIAM}
}

@article{arjevani2019oracle,
  title={Oracle complexity of second-order methods for smooth convex optimization},
  author={Arjevani, Yossi and Shamir, Ohad and Shiff, Ron},
  journal={Mathematical Programming},
  volume={178},
  number={1},
  pages={327--360},
  year={2019},
  publisher={Springer}
}

@article{nocedal1999numerical,
  title={Numerical optimization},
  author={Nocedal, Jorge and Wright, Stephen J.},
  year={1999},
  publisher={Springer}
}

@article{kinderlehrer2000introduction,
  title={An introduction to variational inequalities and their applications},
  author={Kinderlehrer, David and Stampacchia, Guido},
  year={2000},
  publisher={SIAM}
}

@article{alves2023search,
  title={A search-free $\mathcal{O}(1/k^{3/2}) $ homotopy inexact proximal-Newton extragradient algorithm for monotone variational inequalities},
  author={Alves, M. Marques and Svaiter, Benar F.},
  journal={SIAM Journal on Optimization},
  volume={34},
  number={4},
  pages={3235--3258},
  year={2024},
  publisher={SIAM}
}

@article{hartman1966some,
  title={On some non-linear elliptic differential-functional equations},
  author={Hartman, Philip and Stampacchia, Guido},
  year={1966}
}

@article{minty1962monotone,
  title={Monotone (nonlinear) operators in Hilbert space},
  author={Minty, George J},
  year={1962}
}

@inproceedings{jiang2024adaptive,
  title={Adaptive and Optimal Second-order Optimistic Methods for Minimax Optimization},
  author={Jiang, Ruichen and Kavis, Ali and Jin, Qiujiang and Sanghavi, Sujay and Mokhtari, Aryan},
  booktitle={NeurIPS},
  year={2024}
}

@article{bullins2022higher,
  title={Higher-order methods for convex-concave min-max optimization and monotone variational inequalities},
  author={Bullins, Brian and Lai, Kevin A.},
  journal={SIAM Journal on Optimization},
  volume={32},
  number={3},
  pages={2208--2229},
  year={2022},
  publisher={SIAM}
}

@article{chen2024near,
  title={Near-optimal algorithms for making the gradient small in stochastic minimax optimization},
  author={Chen, Lesi and Luo, Luo},
  journal={JMLR},
  volume={25},
  number={387},
  pages={1--44},
  year={2024}
}

@inproceedings{cai2022finite,
  title={Finite-time last-iterate convergence for learning in multi-player games},
  author={Cai, Yang and Oikonomou, Argyris and Zheng, Weiqiang},
  booktitle={NeurIPS},
  year={2022}
}

@article{nesterov2006cubic,
  title={Cubic regularization of Newton method and its global performance},
  author={Nesterov, Yurii and Polyak, Boris T},
  journal={Mathematical Programming},
  volume={108},
  number={1},
  pages={177--205},
  year={2006},
  publisher={Springer}
}

@inproceedings{carmon2022optimal,
  title={Optimal and adaptive monteiro-svaiter acceleration},
  author={Carmon, Yair and Hausler, Danielle and Jambulapati, Arun and Jin, Yujia and Sidford, Aaron},
  booktitle={NeurIPS},
  year={2022}
}

@article{jordan2025adaptive,
  title={Adaptive, doubly optimal no-regret learning in strongly monotone and exp-concave games with gradient feedback},
  author={Jordan, Michael and Lin, Tianyi and Zhou, Zhengyuan},
  journal={Operations Research},
  volume={73},
  number={3},
  pages={1675--1702},
  year={2025},
  publisher={INFORMS}
}

@inproceedings{kovalev2022first,
  title={The first optimal acceleration of high-order methods in smooth convex optimization},
  author={Kovalev, Dmitry and Gasnikov, Alexander},
  booktitle={NeurIPS},
  year={2022}
}

@article{huang2022approximation,
  title={An approximation-based regularized extra-gradient method for monotone variational inequalities},
  author={Huang, Kevin and Zhang, Shuzhong},
journal = {SIAM Journal on Optimization},
volume = {35},
number = {3},
pages = {1469-1497},
year = {2025}
}

@article{nesterov2023high,
  title={High-Order Reduced-Gradient Methods for Composite Variational Inequalities},
  author={Nesterov, Yurii},
  journal={arXiv preprint arXiv:2311.15154},
  year={2023}
}

@article{nesterov2008accelerating,
  title={Accelerating the cubic regularization of Newton’s method on convex problems},
  author={Nesterov, Yurii},
  journal={Mathematical Programming},
  volume={112},
  number={1},
  pages={159--181},
  year={2008},
  publisher={Springer}
}

@article{demmel2007fast,
  title={Fast linear algebra is stable},
  author={Demmel, James and Dumitriu, Ioana and Holtz, Olga},
  journal={Numerische Mathematik},
  volume={108},
  number={1},
  pages={59--91},
  year={2007},
  publisher={Springer}
}

@inproceedings{duan2023faster,
  title={Faster matrix multiplication via asymmetric hashing},
  author={Duan, Ran and Wu, Hongxun and Zhou, Renfei},
  booktitle={FOCS},
  year={2023}
}

@inproceedings{gasnikov2019optimal,
  title={Optimal tensor methods in smooth convex and uniformly convexoptimization},
  author={Gasnikov, Alexander and Dvurechensky, Pavel and Gorbunov, Eduard and Vorontsova, Evgeniya and Selikhanovych, Daniil and Uribe, C{\'e}sar A},
  booktitle={COLT},
  year={2019}
}

@inproceedings{bubeck2019near,
  title={Near-optimal method for highly smooth convex optimization},
  author={Bubeck, S{\'e}bastien and Jiang, Qijia and Lee, Yin Tat and Li, Yuanzhi and Sidford, Aaron},
  booktitle={COLT},
  year={2019}
}

\end{document}